\documentclass[12pt]{article}

\usepackage{amsfonts,amsmath,amssymb,latexsym,xcolor,mathrsfs,tikz,breqn,extarrows}
\usepackage{amsthm,authblk}
\usepackage{multirow}
\usepackage{amssymb}
\usepackage{amsmath}
\usepackage{amsthm}
\usepackage{tikz}
\usepackage{mathtools}
\usepackage{calc}
\usepackage{tikz}
\usepackage{pgfplots}
\usetikzlibrary{patterns,arrows,decorations.pathreplacing}
\usepackage{diagbox}
\usepackage{makecell}
\usepackage{bbding}

\def\q{\hfill\rule{1ex}{1ex}}
\def\0{\emptyset}

\def\q{\hfill\rule{1ex}{1ex}}

\usepackage{amsthm}

\newtheorem{theorem}{Theorem}[section]
\newtheorem{definition}[theorem]{Definition}
\newtheorem{lemma}[theorem]{Lemma}

\newtheorem{cor}[theorem]{Corollary}
\newtheorem{prop}[theorem]{Proposition}

\usepackage{algorithm}
\usepackage{algpseudocode}
\usepackage{graphicx}
\usepackage{amsmath}
\usepackage{subcaption,amssymb}
\begin{document}
\title{\bf All minimum $C_4$-saturated multipartite graphs
	 }
\author[1]{
Yiduo Xu}
\author[2]{
Zhen He\thanks{Corresponding author. E-mail: \texttt{zhenhe@bjtu.edu.cn}}}
\author[1]{
Mei Lu}
\author[1]{
Yanzhe Qiu}

\affil[1]{\small Department of Mathematical Sciences, Tsinghua University, Beijing 100084, China}
\affil[2]{\small School of Mathematics and Statistics, Beijing Jiaotong University, Beijing 100044, China.}
\date{}

\maketitle\baselineskip 16.3pt

\begin{abstract}
	A subgraph $H$ of $G$ is said to be $F$-saturated relative to $G$, if $H$ does not contain any copy of $F$, but the addition of any edge $e$ in $E(G)\backslash E(H)$ would create a copy of $F$.
    The minimum size of an $F$-saturated graph relative to $G$ is denoted by $sat(G,F)$.
    Let $K_k^n$ be the complete $k$-partite graph with $n$ vertices in each part.
    In this paper, we determine $sat(K_4^n,C_4)$ for all $ n \geq 2$.
    Moreover, we determine all extremal configurations of $sat(K_k^n,C_4)$ for all $n\ge 2$ and $k\ge 4 $.
	\end {abstract}
	
	{\bf Keywords.} saturation number, multipartite graph, cycle
	
\section{Introduction}

	In this paper we only consider finite, simple and undirected graphs.
    For a graph $G$, we use $V(G)$ to denote the vertex set of $G$, $E(G)$ the edge set of $G$, $|G|$ the order of $G$ and $e(G)$ the size of $G$.
    We say a graph $G$ is a $k$-partite graph, if $V(G)$ can be partitioned into $k$ independent sets. When $k=4$, we say $G$ is a quadripartite graph.
	
    For a graph $G$ and $e \not \in E(G)$, we use $G+e$ to denote the graph obtained by adding the edge $e$ into $G$.
    Similarly, let $G - e$ be the graph obtained by deleting the edge $e$ in $G$.
    For a vertex set $A$, let $G[A]$ be the subgraph of $G$ induced by $A$. 
	
	For a $k$-partite graph $G$ and $u \in V(G)$, let $N_G(u)=\{ v : uv \in E(G) \}$ be the \textit{neighbourhood} of $u$, and $d_G(u)=|N_G(u)|$  the degree of $u$.
    We use $\delta(G)$  to denote the minimum  degree of  $G$. Without confusion, we abbreviate as $N(u)$, $d(u)$ and $\delta$, respectively.
    For a vertex set $A $, let $N(A)= \bigcup \limits_{u \in A} N(u) \, \setminus \, A$.
    The \textit{distance} $d(u,v)$ between two vertices $u,v$ is the number of edges contained in the shortest path connecting $u$ and $v$.
    If $u,v$ are not in the same  component, then $d(u,v) = \infty$. Let $d(u,v)=0$ if $u=v$.
    We use $diam_p(G)$ to denote the maximum distance between two vertices that do not lie in the same part of $G$. For a positive integer $k$, let $[k]:=\{1,2,\ldots,k\}$.
	
	Given graphs $G$ and $F$, we say a subgraph $H$ of $G$ is \textit{$F$-saturated relative to $G$}, if $H$ does not contain any copy of $F$ but the addition of any edge $e$ in $E(G)\backslash E(H)$ would create a copy of $F$.
    The \textit{saturation number} of $F$ relative to $G$ is denoted by
	\begin{equation*}
	   sat(G,F)=\min \{ e(H):H  \; \text{is}  \; F\text{-saturated relative to } G \} \, .
	\end{equation*}
Let $Sat(G,F) = \{H : H \; \text{is}  \; F\text{-saturated relative to } G \text{ and } e(H)= sat(G,F) \}$.
If $G=K_n$, we abbreviate $sat(G,F)$ as $sat(n,F)$.
The first saturation problem was studied in 1964 by Erd\H os, Hajnal and Moon \cite{EHM} who proved that $sat(n,K_r)=(r-2)(n-1)-\frac{(r-2)(r-1)}{2}$.
For readers interested in the saturation problem, we refer to the survey \cite{survey}.
	
	The study of the cycle saturation problem has a rich history.
    The exact values and all extremal configurations are known for $C_3$ \cite{EHM}, $C_4$ \cite{OLL,TUZ} and $C_5$ \cite{Chen1,Chen2}.
    The exact asymptotic behavior was determined for $C_6$ \cite{LAN}.
    For $k \geq 7$, F{\"u}redi and Kim \cite{Fur} showed that $\left( 1 + \frac{1}{k+2}\right) n -1 \leq sat(n,C_k) \leq \left( 1+ \frac{1}{k-4} \right)n + \binom{k-4}{2}$ for $n \geq 2k-5$.
    Most recently, the exact asymptotic behavior was determined for $C_k$ with all fixed even integers $k\ge 28$ \cite{Moh}.
	
	

    Let $K_k^n$ be a complete $k$-partite graph with $n$ vertices in each part.
    The saturation number relative to complete partite graphs was initiated independently by Bollob\'as \cite{BOL} and Wessel \cite{WES}, who determined $sat(K_{n,n},K_{t,t})=n^2-(n-t+1)^2$.
    In particular, they proved $sat(K_{n,n}, C_4) = 2n - 1$. For results on the saturation number of a complete graph relative to a multipartite graph, Ferrara, Jacobson, Pfender and  Wenger \cite{FJP} determined $sat(K_k^n,K_3)$ for $k \geq 3$ and $n \ge 100$.
    Roberts \cite{ROB} showed that
    $sat(K_4^n,K_4) = 18n - 21$ for sufficiently large $n$. Gir\~ ao, Kittipassorn and  Popielarz \cite{GIRAO} determined the exact asymptotic of $sat(K_k^n,K_r) $ for $k \geq r \geq 3$.
    Notably, a clique of size $3$ is precisely the cycle $C_3$, the result on $K_3$-saturation directly resolves the case of triangles.
    The authors \cite{Xu} proved $sat(K_k^n,C_\ell)=kn+O(1)$ for all $\ell\ge 5 $ and $k\ge 2$.
    The situation for $\ell = 4$ differs because the presence of a pendant vertex (i.e., vertex of degree 1) among the four parts can force the existence of a $K_4$, violating the condition of being $C_4$-saturated.
    The saturation number of $C_4$ relative to the complete bipartite graph is determined in \cite{BOL,WES}.
    The authors \cite{Xu} determined $sat(K_k^{n},C_4)$ for $k=3$ and $k\ge 5$ and determined the extremal configuration for $k=3$.
    They also bounded $sat(K_4^{n},C_4)$.

\begin{theorem}\label{T19}\cite{Xu} For any $k \geq 5$ and $n \geq 1$,
    \begin{equation*}
    sat(K_k^n, C_4) \; =\; \left\lfloor  \frac{3(k-1)n -2}{2} \right\rfloor.
    \end{equation*}
Moreover, $ \lfloor \frac{9}{2}n-1 \rfloor \leq sat(K_4^n, C_4) \leq 5n-1$.
\end{theorem}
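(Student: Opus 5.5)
We prove Theorem~\ref{T19} in two halves: a construction for the upper bound and a degree-counting argument for the lower bound. The lower-bound argument should work for every $k\ge 4$; only the matching construction needs $k\ge5$.

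\textbf{Upper bound.} We build an explicit $C_4$-saturated subgraph $H$ of $K_k^n$. Let $V_1,\dots,V_k$ be the parts. The natural template is a ``core'' joined to pendant-like structures. Fix a small core of vertices $a_1,\dots,a_k$ with $a_i\in V_i$, arranged as a $C_4$-free graph of diameter at most $2$ between different parts. For instance, take a friendship-type or star-of-triangles configuration centered at $a_1$.

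Each remaining vertex $v\in V_i$ is then attached so that its average degree is about $3/2$. We pair up the leftover vertices across parts into triangles or paths hanging off the core. Each such attachment must guarantee two things: every non-edge $vw$ with $v,w$ in different parts has a path of length $3$ between its ends, and no $C_4$ is created. Counting gives roughly $\frac{3}{2}$ edges per non-core vertex, $\frac32(k-1)n$ in total after adjusting for the core. The floor term comes from the parity of the number of attachable vertices.

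For $k=4$, the same type of construction with slightly more edges gives $5n-1$. Here a pendant vertex $v$ cannot be adjacent to anything useful: a pendant vertex must reach every vertex of the other three parts by paths of length $3$, which forces dense structure near its neighbour. Hence each vertex costs more.

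\textbf{Lower bound.} Let $H$ be $C_4$-saturated relative to $K_k^n$. First, $H$ is connected and $diam_p(H)\le 3$. Next we study low-degree vertices. A vertex of degree $1$, say $v$ with $N(v)=\{u\}$, forces $u$ to have a path of length $2$ to every vertex outside the part of $v$ that is not adjacent to $v$. Since $H$ is $C_4$-free, two pendant vertices sharing a neighbour, or neighbours lying in the same part, are constrained.

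We then run a discharging argument. Each vertex starts with charge $d(v)$, and we aim to give every vertex final charge at least $3(k-1)/k\cdot$(its share), that is, $e(H)\ge \frac{3}{2}\cdot\frac{k-1}{k}|H|$ up to an additive constant. Concretely, each degree-$1$ vertex receives charge from its neighbour $u$. The neighbour $u$ has large degree, because it must dominate via length-$2$ paths the roughly $(k-1)n$ vertices in other parts, and $C_4$-freeness implies that the neighbourhoods of distinct neighbours of $u$ are disjoint outside $u$. The main obstacle is degree-$2$ and degree-$1$ vertices clustered around a few high-degree hubs. I would handle this by classifying vertices by distance to the set of maximum-degree vertices and using the disjointness of second neighbourhoods, which holds because $H$ is $C_4$-free, to bound $\sum_v (d(v)-\frac32)$ from below by a constant.

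For $k=4$ the same counting yields only $\lfloor \frac92 n-1\rfloor$, which matches the stated lower bound.
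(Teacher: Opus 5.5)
Both halves of your proposal have genuine gaps, and the accounting is off by a factor of two in places.

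\textbf{Upper bound.} You never exhibit a graph, so neither $C_4$-freeness nor saturation is ever checked. The count you sketch also cannot produce $\lfloor\frac{3(k-1)n-2}{2}\rfloor$. An average degree of about $\frac32$ would mean only about $\frac34kn$ edges. And $\frac32$ edges per non-core vertex gives $\frac32(kn-k)$, which exceeds $\frac32(k-1)n$ by $\frac32(n-k)$; no constant-size core can absorb a linear error. The missing idea is that the saving of $\frac32 n$ comes from about $3n$ vertices of degree $1$.

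In the paper's family $\mathcal{Y}_k^n$:
\begin{itemize}
\item the parts $V_1,V_2,V_3$ consist of pendant vertices, up to one vertex when $(k-1)n$ is even;
\item all pendants of $V_i$ hang on a single vertex $y_i$, and $y_1,y_2,y_3$ form a triangle among the non-pendant vertices;
\item all other vertices have degree $2$ and induce a perfect matching whose edges $uv$ satisfy $\emptyset\neq N(u)\cap N(v)\subseteq\{y_1,y_2,y_3\}$.
\end{itemize}
For $(k-1)n$ odd this has $3n+3+\frac32\bigl((k-3)n-3\bigr)=\frac{3(k-1)n-3}{2}$ edges.

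For $k=4$ you likewise give no construction, and your diagnosis is off. Pendants are still used there: graphs in $\mathcal{F}_1^n$ have $2n-1$ of them. What changes is that the non-pendant vertices of $V_4$ cannot be matched among themselves. Each needs a partner in $V_1\cup V_2\cup V_3$, which costs about $n$ pendants.

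\textbf{Lower bound.} Your stated goal is miscalibrated. A constant lower bound on $\sum_v\bigl(d(v)-\frac32\bigr)$ yields only $e(H)\ge\frac34kn-O(1)$, whereas you need $\sum_v\bigl(3-d(v)\bigr)\le 3n+O(1)$. Two ingredients for this are absent.

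The first caps the deficit. Pendants $u,v$ in distinct parts have distinct, adjacent neighbours $u',v'$, since the only possible $P_4$ from $u$ to $v$ is $u\,u'\,v'\,v$. So four pendant-bearing parts would put a $K_4\supseteq C_4$ on these neighbours. Hence at most three parts contain pendants (Lemma~\ref{L21}), which is exactly why the answer is about $\frac32(k-1)n$ rather than $\frac32kn$.

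The second is a valid way to make the non-pendant vertices pay $\frac32$ each. Your mechanism, that the neighbour $u$ of a pendant has large degree, fails. Disjointness of second neighbourhoods does not force $d(u)$ to be large, because the length-$2$ paths from $u$ can all run through a few neighbours of large degree. For example, $y_1$ has degree $3$ in a graph of $\mathcal{F}_1^n$ with $|I_1|=1$ and no matching edge attached to $y_1$.

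The paper instead fixes a pendant $x$ with neighbour $y$ and splits $V(G)$ into the distance layers $S_1,\dots,S_4$ from $x$, using $diam_p(G)\le3$. Each vertex of $S_2\cup S_3$ is weighted by its number of edges to the previous layer, and edges inside a layer or between $S_3$ and $S_4$ are split equally between their ends. Saturation forces every $u\in S_2\setminus V_1$ to have a neighbour in $S_2$, since its $P_4$ to $x$ must be $x\,y\,a\,u$. Together with $d(u)\ge 2$ for non-pendants in $S_3$, every vertex then gets weight at least $\frac32$. The exceptions are the vertices of $V_1$ at distance $2$ or $4$ from $x$ and the pendants of the other parts; these get weight at least $1$ and number at most $3n-1$, giving $e(G)\ge\frac{3(k-1)n-3}{2}$.

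Finally, when $\delta\ge2$ there are no pendants at all, and the degree-$2$ vertices need a separate argument. The paper handles this in Propositions~\ref{P23} and~\ref{P25}, the latter via Lemma~\ref{L24} on connected graphs in which every edge lies in a triangle. Your plan only names them as the main obstacle.
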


In this paper, we determine $sat(K_4^n,C_4)$ for all $ n \geq 2$.
Moreover, we determine all extremal configurations of $sat(K_k^n,C_4)$ for all $n\ge 2$ and $k\ge 4$.
The rest of this paper is organized as follows.
In Section 2 we present our main results on the saturation number of $C_4$ relative to multipartite graphs.
In Section 3 we prove these results.

\section{Main results}

Our main results are precise determinations of $Sat(K_k^{n},C_4)$ for $k\ge 4$.
To state these theorems, we first need to describe the families of all extremal configurations.

\subsection{Constructions and Main Results}

We first address the case of $k \geq 5$ and $n\ge 2$.
Let $\mathcal{Y}_k^n$ denote the family of $k$-partite graphs $G=V_1 \cup \ldots \cup V_k$ with $n$ vertices in each part such that the following properties hold\,:
\vspace{0.3em}

\noindent {\bf (1)} If $(k-1)n \equiv 1 \; (\mod  2 \,)$, then

(i) there exists $X^*=\{y_1,y_2,y_3\} \subseteq V_4 \cup \ldots \cup V_k$ such that $G[X^*]=K_3$\,;

(ii) for any $x_i \in V_i$ where $i \in [3]$, $N(x_i)=\{y_i\}$\,;

(iii) for any $u \in \left( \bigcup \limits_{i=4}^k V_i \right) \setminus X^* = : V^*$, $d(u)=2$ with $|N(u) \cap X^*|=1$ and $|N(u) \cap V^*|=1$. Moreover, $G[V^*]$ is a perfect matching such that for any $uv \in E(G[V^*]) $, we have $\emptyset \neq N(u) \cap N(v) \subseteq X^* $. \q
\vspace{0.5em}

\noindent {\bf (2) }If $(k-1)n \equiv 0 \; (\mod  2 \,)$, partition $V_3$ into $V_3^-$ and $V_3^+$ such that $|V_3^-|=n-1$ and $|V_3^+|=1$. We now rewrite $V_i^-:=V_i$ when $i \in [2]$ and $V_i^+:=V_i$ when $i \in [k] \setminus [4]$.  Then

(i) there exists $X^*=\{y_1,y_2,y_3\} \subseteq V_3^+ \cup V_4^+ \cup \ldots \cup V_k^+$ such that $G[X^*]=K_3$\,;

(ii) for any $x_i \in V_i^-$ where $i \in [3]$, $N(x_i)=\{y_i\}$\,;

(iii) for any $u \in \left( \bigcup \limits_{i=3}^k V_i^+ \right) \setminus X^* = : V^*$, $d(u)=2$ with $|N(u) \cap X^*|=1$ and $|N(u) \cap V^*|=1$. Moreover, $G[V^*]$ is a perfect matching such that for any $uv \in E(G[V^*]) $, we have $\emptyset \neq N(u) \cap N(v) \subseteq X^* $. \q
\vspace{0.3em}

Now we have our first main result.

\begin{theorem}\label{T111} For any $k \geq 5$ and $n \geq 2$, we have
\begin{equation*}
sat(K_k^n, C_4)  = \left\lfloor  \frac{3(k-1)n -2}{2}\right \rfloor .
\end{equation*}Moreover, $ Sat(K_k^n, C_4) = \mathcal{Y}_k^n$.
\end{theorem}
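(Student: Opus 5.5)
Since Theorem~\ref{T19} already gives $sat(K_k^n,C_4)=\lfloor \frac{3(k-1)n-2}{2}\rfloor$ for $k\ge 5$, the new content is the characterization $Sat(K_k^n,C_4)=\mathcal{Y}_k^n$. The plan has two directions. First, I verify that every $G\in\mathcal{Y}_k^n$ is $C_4$-saturated relative to $K_k^n$ and has the right size. Second, I show that any $C_4$-saturated $H$ with exactly $\lfloor \frac{3(k-1)n-2}{2}\rfloor$ edges has the structure described.

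\emph{Membership and size.} Let $G\in\mathcal{Y}_k^n$. Since $G[V^*]$ is a perfect matching and every vertex of $V^*$ also has exactly one neighbour in $X^*$, the edge count is
\[
3 + (\text{number of pendant } x_i) + |V^*| + |V^*|/2 .
\]
In case (1), this equals $3+3n+\frac32((k-3)n-3)$. Checking parity gives $\lfloor\frac{3(k-1)n-2}{2}\rfloor$. Case (2) is handled the same way, with the shifted sets $V_3^-$ and $V_3^+$.

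$C_4$-freeness comes from three facts. $X^*$ induces a triangle. Each matched pair $uv$ sees a common neighbour $y\in X^*$, giving a triangle $uvy$. Pendant vertices lie on no cycle. Hence every block of $G$ is a triangle, so $G$ has no $C_4$.

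Saturation: for a non-edge $ab$ between different parts, I will exhibit a path of length $3$ from $a$ to $b$. The key point is that any two vertices of $X^*$ are adjacent, and every non-pendant vertex lies in a triangle through some $y_j$. So one checks case by case, according to whether each of $a$ and $b$ is pendant, lies in $V^*$, or lies in $X^*$, that $a$ and $b$ are joined by a path of length exactly $3$. The part constraints guarantee that the pendant $x_i$ is never required to attach to a vertex in its own part: $x_i\in V_i$, and $y_i$ lies outside $V_1\cup V_2\cup V_3$, or in $V_3^+$ in case (2). The one delicate pair is $x_i$ together with a vertex of its own part, but such pairs are not edges of $K_k^n$.

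\emph{Uniqueness.} I would rerun the lower-bound argument behind Theorem~\ref{T19} and track when equality holds. The basic counting is this. Let $P$ be the set of degree-$1$ vertices. Two pendant vertices in different parts attached to the same vertex $w$ force, when their connecting edge is added, a $C_4$ that requires a path of length $3$ between them. That is impossible if both hang on $w$. So pendants in different parts have distinct supports. Moreover, adding an edge from a pendant $x$ with support $w$ forces every non-neighbour $z$ of $x$ outside its part to satisfy $d(w,z)\le 2$, with a path of exact length $2$.

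A discharging or weight argument then works as follows. Each pendant sends weight to its support, and the vertices of degree $2$ pair up with their common neighbours. This yields $e(H)\ge \frac{3}{2}(\text{number of non-pendant vertices})+\ldots$, which is tight only when three things hold. First, the pendants occupy at most three parts and, apart from at most one vertex, fill them entirely. Second, their supports form a triangle $X^*$. Third, every remaining vertex has degree exactly $2$, with one neighbour in $X^*$ and one partner forming a triangle.

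The parity of $(k-1)n$ decides whether the three pendant parts are full, as in case (1), or whether one vertex of $V_3$ must be moved into $V^*$ so that $|V^*|$ is even, as in case (2).

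\textbf{Main obstacle.} The hard step is the equality analysis. Specifically, I must rule out configurations where a few vertices of degree $\ge 3$ compensate elsewhere, and I must show that the number of pendant parts is exactly three. With four pendant parts, the supports would need pairwise distance $\le 2$ while avoiding a $C_4$, and with fewer than three parts the count exceeds the bound. I would first try to prove that slack in the weight inequality is at most $1/2$, which forces every local inequality except possibly one to be tight. I would then handle that one exceptional vertex by hand, using $k\ge5$ to guarantee enough parts to avoid part conflicts.
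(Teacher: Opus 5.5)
Your membership direction (the edge count, every cycle lying in a triangle block, explicit length-$3$ paths) is fine, and it is more than the paper writes out. The gap is in uniqueness, which is the only new content beyond Theorem~\ref{T19}. You sketch the shape of a weight argument and then list the structure of $\mathcal{Y}_k^n$ as its equality case without deriving it. Several concrete ingredients are missing.

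First, your weights are routed through pendant vertices, so they give nothing when $\delta(H)\ge 2$, and that case needs its own strict bound. The paper uses Proposition~\ref{P23}: if some degree-$2$ vertex lies in no triangle, then $e\ge\frac{(3k-1)n-4}{2}$. Otherwise it uses Proposition~\ref{P25}: Lemma~\ref{L24} on the components of the union of triangles through degree-$2$ vertices, plus the edges forced between components, gives $e\ge\frac{3(k-1)n}{2}$. Both bounds exceed $\lfloor\frac{3(k-1)n-2}{2}\rfloor$ for $n\ge2$.

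Second, ``pairwise distance $\le 2$'' between supports does not bound the number of pendant parts. You need that the only length-$3$ path between pendants $u,v$ in different parts is $uu'v'v$. Then supports are distinct and pairwise adjacent, and four pendant parts would give a $K_4$. You also need that, with three pendant parts, all pendants of one part share a support. Two supports $u_1^*,u_2^*$ in $V_1$, together with the supports $v^*,w^*$ from the other two parts, give the $C_4$ $u_1^*v^*u_2^*w^*$. In a characterization this cannot be assumed by edge switching, and $\mathcal{Y}_k^n$ requires $N(x_i)=\{y_i\}$ for every $x_i\in V_i$.

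Third, the mechanism that turns tight weights into ``degree $2$, one neighbour in $X^*$, one matched partner'' is absent. The paper takes distance layers $S_i$ from a pendant $x\in V_1$ with support $y_1$. The sharpened count $|A_3|\le n_2+n_3$ of Proposition~\ref{P22} forces $|I|\ge 3n-1$ and $n_1=n_2=n$, hence $S_4=\emptyset$. Then $f(u)=\frac32$ on $B_2\cup B_3$ makes $G[B_2]$ (containing $y_2y_3$) and $G[B_3]$ perfect matchings. A further argument, using $C_4$-freeness and the required length-$3$ paths to $x_2,x_3$, shows that each matched pair in $B_3$ has the same neighbour, $y_2$ or $y_3$, in $B_2$.

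Fourth, in the even case the half unit of slack has two outcomes. One is a single missing pendant in $V_3$, which is exactly case (2) of $\mathcal{Y}_k^n$. The other is $|I|=3n$ with a single vertex $u^*$ of weight $2$, and this must be excluded. No vertex other than $y_1$ has two neighbours in $S_2=N(y_1)\setminus\{x\}$, since that would close a $C_4$ through $y_1$. Hence $u^*$ lies in $B_3$ with one neighbour in $B_2$ and two neighbours $u_1,u_2$ in $B_3$. Analysing $N(u_1)\cap\{y_2,y_3\}$ and $N(u_2)\cap\{y_2,y_3\}$ then yields either a $C_4$ or a pair $u_i,x_j$ with no connecting path of length $3$.

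The hypothesis $k\ge5$ is not the tool for that exclusion, as your plan suggests. In the paper it enters only through the value of the upper bound, which is what forces exactly three pendant parts and rules out $\delta\ge2$.
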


\begin{figure}[!ht]
    \centering
    \subcaptionbox{$H_1 \in \mathcal{F}_2^n \cap \mathcal{F}_4^n$.\label{Fig:B_1}}%
    {\begin{tikzpicture}[scale=.47]
    \draw (-10,0) arc(0:360:1cm and 3.5cm);
    \draw (-6,0) arc(0:360:1cm and 3.5cm);
    \draw (-2,-1) arc(0:360:1cm and 4.5cm);
    \draw (2,-1) arc(0:360:1cm and 4.5cm);
    \draw (1,-1) node[align=center]{$V_4$};
    \draw (-3,-1) node[align=center]{$V_3$};
    \draw (-7,0) node[align=center]{$V_2$};
    \draw (-11,0) node[align=center]{$V_1$};
    \filldraw (-6.99,-5) circle (5pt);
    \filldraw (-10.99,-5) circle (5pt);
    \draw (-7.0,-4.4) node[align=center]{$y^*$};
    \draw (-11.0,-4.4) node[align=center]{$x^*$};
    \draw (-10.99,-5)--(-6.99,-5);
    \draw (-10.5,-3)--(-6.99,-5) (-3.8,-3.6)--(-6.99,-5) (0.2,-3.6)--(-6.99,-5) (-7.5,-3)--(-10.99,-5);
    \draw[red,dashed,thick] (-2,-1)--(0,-1);
    \end{tikzpicture}}

    \medskip
    \subcaptionbox{$H_2 \in \mathcal{F}_2^n \cap \mathcal{F}_3^n$.\label{Fig:B_2}}%
    {\begin{tikzpicture}[scale=.47]
    \draw (-10,-1) arc(0:360:1cm and 4.5cm);
    \draw (-6,0) arc(0:360:1cm and 3.5cm);
    \draw (-2,0) arc(0:360:1cm and 3.5cm);
    \draw (2,0) arc(0:360:1cm and 3.5cm);
    \draw (1,0) node[align=center]{$V_4$};
    \draw (-3,0) node[align=center]{$V_3$};
    \draw (-7,0) node[align=center]{$V_2$};
    \draw (-11,-1) node[align=center]{$V_1$};
    \filldraw (-2.99,-5) circle (5pt);
    \filldraw (-6.99,-5) circle (5pt);
    \filldraw (1.01,-5) circle (5pt);
    \draw (1.4,-4.4) node[align=center]{$w^*$};
    \draw (-2.6,-4.4) node[align=center]{$z^*$};
    \draw (-7.0,-4.4) node[align=center]{$y^*$};
    \draw (1.01,-5)--(-2.99,-5) (-6.99,-5)--(-2.99,-5) (1.01,-5) arc(0:-180:4cm and 1cm) (-6.99,-5);
    \draw (-10.2,-3.6)--(-6.99,-5) (-3.5,-3)--(-6.99,-5) (0.5,-3)--(-6.99,-5) (-6.5,-3)--(-2.99,-5);
    \draw[red,dashed,thick] (-2,0)--(0,0);
    \end{tikzpicture}}

    \medskip
    \subcaptionbox{$H_3 \in \mathcal{F}_3^n$.\label{Fig:B_3}}%
    {\begin{tikzpicture}[scale=.47]
    \draw (-10,-1) arc(0:360:1cm and 4.5cm);
    \draw (-6,0) arc(0:360:1cm and 3.5cm);
    \draw (-2,0) arc(0:360:1cm and 3.5cm);
    \draw (2,0) arc(0:360:1cm and 3.5cm);
    \draw (1,0) node[align=center]{$V_4$};
    \draw (-3,0) node[align=center]{$V_3$};
    \draw (-7,0) node[align=center]{$V_2$};
    \draw (-11,-1) node[align=center]{$V_1$};
    \filldraw (-2.99,-5) circle (5pt);
    \filldraw (-6.99,-5) circle (5pt);
    \filldraw (1.01,-5) circle (5pt);
    \draw (1.4,-4.4) node[align=center]{$w^*$};
    \draw (-2.6,-4.4) node[align=center]{$z^*$};
    \draw (-7.0,-4.4) node[align=center]{$y^*$};
    \draw (1.01,-5)--(-2.99,-5) (-6.99,-5)--(-2.99,-5) (1.01,-5) arc(0:-180:4cm and 1cm) (-6.99,-5);
    \draw (-10.52,-5) arc(-180:0:5.77cm and 2cm) (1.02,-5) (-3.5,-3)--(-6.99,-5) (0.5,-3)--(-6.99,-5) (-6.5,-3)--(-2.99,-5);
    \draw[red,dashed,thick] (-2,0)--(0,0);
    \end{tikzpicture}}
    \caption{Examples of graphs in $\mathcal{F}^n$.}
    \label{Fig:Extremal_2-connected_Const}
\end{figure}

\vspace{0.3em}

For $k = 4$, $n \geq 2$ and $ i \in [4]$, let $\mathcal{F}_i^n$ denote the family of quadripartite graphs $G=V_1 \cup \ldots \cup V_4$ with $n$ vertices in each part such that the following properties hold\,:
\vspace{0.3em}

\noindent {\bf (1)} $G \in \mathcal{F}_1^n$ if\,:

(i) there exists $X^*=\{y_1,y_2,y_3\} \subseteq V_2 \cup V_3 \cup V_4$ such that $G[X^*]=K_3$\,;

(ii) there exists $\emptyset \neq I_i \subseteq V_i$ where $ i \in [3]$ with $|I_1|+|I_2|+|I_3|=2n-1$ such that for any $x_i \in I_i$ , $N(x_i)=\{y_i\}$\,;

(iii) for any $u \in \left( \bigcup \limits_{i=1}^4 V_i \right) \setminus \left( \bigcup \limits_{i=1}^3 I_i \cup X^* \right) = : V^*$, $d(u)=2$ with $|N(u) \cap X^*|=1$ and $|N(u) \cap V^*|=1$. Moreover, $G[V^*]$ is a perfect matching such that for any $uv \in E(G[V^*]) $, either $ u \in V_4$ or $v \in V_4$ and we have $\emptyset \neq N(u) \cap N(v) \subseteq X^* $. \q
\vspace{0.5em}

\noindent {\bf (2)} $G \in \mathcal{F}_2^n$ if\,:

(i) there exists $y^* \in V_2$ such that
$N(y^*)=V_1\cup V_3\cup V_4$ and $G[V_3 \cup V_4] $ is a perfect matching;

(ii) for any $u \in V_2 \setminus \{y^*\}$, $d(u)=1$ where $N(u)\subseteq V_1 \cup V_3 \cup V_4$. \q
\vspace{0.5em}

\noindent {\bf (3)} $G \in \mathcal{F}_3^n$ if\,:

(i) there exist $y^* \in V_2$, $\{z^*,w^*\} \subseteq V_3 \cup V_4$  such that for any $u \in V_2 \setminus \{y^*\}$, $N(u)=\{z^*\}$ and $G[\{y^*,z^*,w^*\}]=K_3$;

(ii) $V_3\cup V_4\subseteq N(y^*)$ and  $G[V_3 \cup V_4] $ is a perfect matching;

(iii) for any $u \in V_1$, $d(u)=1$ and $N(u) \subseteq \{y^*,w^*\}$.
\q
\vspace{0.5em}

\noindent {\bf (4)} $G \in \mathcal{F}_4^n$ if\,:

(i) there exist $x^* \in V_1$ and $y^* \in V_2$  such that $V_2 \subseteq N(x^*)$ and  $V_1 \subseteq N(y^*)$;

(ii)  $G[V_3 \cup V_4] $ is a perfect matching. For any $v \in V_i $ with $i \in \{3,4\}$, $N(v)=\{y^*,v'\}$ for some $v' \in V_{7-i}$ or $N(v)=\{x^*,v'\}$ for some $v' \in V_{7-i}$
\q
\vspace{0.5em}

Notice that $\mathcal{F}_2^n \cap \mathcal{F}_3^n \neq \emptyset$ and $\mathcal{F}_2^n \cap \mathcal{F}_4^n \neq \emptyset$ as shown in Figure 1.
Let $\mathcal{F}^n=\mathcal{F}_1^n \cup \mathcal{F}_2^n \cup \mathcal{F}_3^n \cup \mathcal{F}_4^n$.

Now we have our another main result. 
\begin{theorem}\label{T112} For $n \geq 2$, we have 
\begin{equation*}
sat(K_4^n, C_4) =5n-1 .
\end{equation*}Moreover $ Sat(K_4^n, C_4)=\mathcal{F}^n$. 
\end{theorem}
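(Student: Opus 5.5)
The upper bound $sat(K_4^n,C_4)\le 5n-1$ is already given by Theorem~\ref{T19}. So the plan has three parts: show that every graph in $\mathcal{F}^n$ has $5n-1$ edges and is $C_4$-saturated, prove a matching lower bound, and then run a stability argument that forces any graph attaining equality into one of the families $\mathcal{F}_1^n,\dots,\mathcal{F}_4^n$.

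\textbf{Step 1: the families are extremal.} For each family we count edges and check saturation directly.
\begin{itemize}
\item In $\mathcal{F}_2^n$ we get $3n$ edges at $y^*$, $n$ matching edges, and $n-1$ pendant edges, giving $5n-1$. A $C_4$ through $y^*$ would need two of its neighbours to have a common neighbour other than $y^*$, which does not happen.
\item In $\mathcal{F}_1^n$ the count is $3+(2n-1)+\tfrac{3}{2}|V^*|$ with $|V^*|=4n-3-(2n-1)=2n-2$, which again gives $5n-1$.
\item $\mathcal{F}_3^n$ and $\mathcal{F}_4^n$ are counted in the same way.
\end{itemize}
For saturation, the key observation is that in each family every pair of non-adjacent vertices from different parts is joined by a path of length $3$. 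A pendant vertex at $y_i$ reaches everything through the triangle $X^*$ or through matching edges. The condition that each matched pair in $V^*$ has its common neighbour inside $X^*$ is exactly what makes these length-$3$ paths exist while still excluding a $C_4$.

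\textbf{Step 2: the lower bound $e(H)\ge 5n-1$.} Let $H$ be $C_4$-saturated relative to $K_4^n$. Call a vertex $u$ a leaf if $d(u)=1$, and let $L$ be the set of leaves.
\begin{itemize}
\item Two leaves in different parts must be joined by a path of length $3$, so their supports are adjacent or equal. The $C_4$-freeness then limits how the supports can be arranged. As noted in the introduction, leaves lying in all four parts would force a $K_4$, which contains a $C_4$.
\item Consequently the leaves occupy at most three parts, and their supports form a clique of size at most $3$, or a single vertex.
\item Next we assign weights. Every non-leaf vertex gets degree at least $2$. We discharge half-edges from high-degree vertices (the supports, and any vertex adjacent to a whole part) to the leaves, aiming to show $\sum_v d(v)\ge 10n-2$.
\end{itemize}
The two regimes are handled separately. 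If some vertex has degree about $3n$ (families $\mathcal{F}_2^n,\mathcal{F}_3^n,\mathcal{F}_4^n$), it alone pays for the leaves. Otherwise the leaves hang off a triangle (family $\mathcal{F}_1^n$). A degree-$2$ vertex $u$ in a fourth part with no leaf forces its neighbours to be close to everything, which produces the $\tfrac32$ edges-per-vertex accounting. The improvement from $\tfrac92 n$ to $5n$ should come from exploiting that at most three parts carry leaves. This forces the fourth part to consist of vertices of degree at least $2$ and to cost more than in the bound of Theorem~\ref{T19}.

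\textbf{Step 3: equality and the main obstacle.} Tracing equality through the counting should show that the following hold:
\begin{itemize}
\item all non-leaf, non-support vertices have degree exactly $2$;
\item $H[V^*]$ is a perfect matching;
\item the leaf supports form a triangle or a dominating vertex.
\end{itemize}
From here we split into cases according to whether there is a vertex adjacent to an entire part, two such vertices $x^*,y^*$, or none; these cases give $\mathcal{F}_2^n/\mathcal{F}_3^n$, $\mathcal{F}_4^n$ and $\mathcal{F}_1^n$ respectively. I expect the main obstacle to be the lower-bound discharging in the mixed cases, where leaves occupy only one or two parts and there are large-degree vertices that are not supports. Sloppy accounting there only reproduces $\tfrac92 n$. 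I would first try induction on $n$: delete one vertex from each part and reduce to $K_4^{n-1}$, charging at least $5$ edges per round. The small cases $n=2,3$ would be checked by hand.
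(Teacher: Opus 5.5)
Step 1 is fine, but Steps 2 and 3 do not contain a proof. The lower bound is only described as what a discharging scheme is ``aiming'' for, and the fallback induction on $n$ has no mechanism. $C_4$-saturation is not preserved when you delete vertices. The length-3 paths witnessing saturation run through hub vertices (such as $y^*$ in $\mathcal{F}_2^n$ or the triangle $X^*$ in $\mathcal{F}_1^n$) and through specific matched degree-2 pairs. So deleting one vertex from each part of an arbitrary saturated graph generally leaves a graph that is not saturated relative to $K_4^{n-1}$. Proving that four vertices meeting at least $5$ edges can always be removed safely presupposes the structure you are trying to establish.

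Three concrete ingredients are missing.
\begin{itemize}
\item You never treat $\delta(H)\ge 2$, where there are no leaves to discharge to. The paper uses Propositions~\ref{P23} and~\ref{P25} there. A degree-2 vertex in no triangle forces $e\ge\frac{11n-4}{2}$. Otherwise the triangles through degree-2 vertices form components $C$ with $e(C)\ge\frac32(|C|-1)$, and any two components containing degree-2 vertices from different parts must be joined by an edge; this gives $e\ge 5n$. Because $\frac{11n-4}{2}=5n-1$ when $n=2$, that case needs a separate check.
\item For $\delta=1$ you need an explicit weighting such as Proposition~\ref{P22}. Layer the vertices by distance from a pendant $x\in V_1$. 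Every vertex other than $x$ and its neighbour then gets weight at least $1$, and at least $\frac32$ unless it lies in $V_1$ or is pendant, so $e\ge\frac{(12-\ell)n-3}{2}$. This rules out $\ell=1$ and gives $5n-1$ for $\ell=2$, but only about $\frac92 n$ for $\ell=3$.
\item For $\ell=3$ the missing $\frac n2$ comes from the leafless part. Let $X^*$ be the triangle of supports. A vertex of $V_4\setminus X^*$ has at most one neighbour in $X^*$ (otherwise a $C_4$ uses two triangle edges), yet it needs length-3 paths to all three leaf classes. Under a suitable weighting it therefore carries at least $2$, while every other vertex outside $X^*$ carries at least $1$, giving $e\ge 3+(4n-3)+(n-1)=5n-1$.
\end{itemize}
Your remark that the fourth part ``costs more'' points in this direction, but none of these facts is stated or proved.

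Step 3 would also not land on the right family, because the equality structure you predict is false on part of $\mathcal{F}^n$. In $\mathcal{F}_2^n$ the leaves of $V_2$ may hang from many different vertices of $V_1\cup V_3\cup V_4$, which need not be pairwise adjacent (two may lie in $V_1$). In $\mathcal{F}_4^n$ the supports form the edge $x^*y^*$. Also, contrary to Step 2, leaves in different parts never share a support, since a common support leaves no path of length $3$ between them. Your trichotomy by vertices adjacent to a whole part does not separate the families either. For $n\ge 3$, a graph in $\mathcal{F}_1^n$ with $|I_1|=n$ has $y_1$ adjacent to all of $V_1$, and in $\mathcal{F}_3^n$ both $y^*$ and $z^*$ are adjacent to whole parts.

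The paper handles the freedom in where leaves attach by the lock/unlock normalization. Moving all pendant edges of the (at most one) unstable part onto a single support preserves size and saturation, and by Lemma~\ref{L38} it does not create a third leaf part. Stable extremal graphs with $\ell=2$ are then classified by where the two adjacent supports lie ($V_1\cup V_2$, $V_2\cup V_3$ or $V_3\cup V_4$), giving $\mathcal{F}_4^n$, $H_2$ and $H_3$. Unlocking recovers exactly $\mathcal{F}_2^n\cup\mathcal{F}_3^n\cup\mathcal{F}_4^n$, while the equality case of the $V_4$ count gives $\mathcal{F}_1^n$ when $\ell=3$. Without some such reduction, tracing equality through a discharging argument will miss the unstable members of $\mathcal{F}_2^n$ and $\mathcal{F}_3^n$.
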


It is straightforward to verify that all graphs in $\mathcal{Y}_k^n$ ($k\ge 5$) are $C_4$-saturated relative to $K_k^{n}$ with $\lfloor  \frac{3(k-1)n -2}{2} \rfloor$ edges and all graphs in $\mathcal{F}^n$ are $C_4$-saturated relative to $K_4^{n}$ with $5n-1$ edges. For brevity and readability, we omit these verifications.
\vspace{0.5em}

\subsection{Properties on the lower bound}

Let $G$ be a $C_4$-saturated $k$-partite graph with $n \geq 2$ vertices in each part.  We would restate and improve some of the properties which have been proven in \cite{Xu}.

\begin{lemma}\label{L21} \cite{Xu}
Let $u$ and $v$ be pendant vertices in distinct parts of $G$, with neighbors $u'$ and $v'$, respectively.
Then $u'\neq v'$, $u'v'\in E(G)$, and at most three parts contain pendant vertices.
Clearly, $G$ is connected and $diam_p(G) \leq 3$.
\end{lemma}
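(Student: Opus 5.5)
Throughout, $G$ is $C_4$-saturated relative to $K_k^n$ with $n\ge 2$; pendant vertices $u\in V_i$, $v\in V_j$ ($i\neq j$) have neighbours $u'$, $v'$. The whole argument rests on the saturation condition for the non-edge $uv$ (it is an edge of $K_k^n$, and $uv\notin E(G)$ unless $G$ is just the edge $uv$, which is impossible since $n\ge 2$ and $k\ge 3$). Adding $uv$ must create a $C_4$ through $uv$, i.e.\ a path $u\,a\,b\,v$ of length three in $G$. Since $N(u)=\{u'\}$ and $N(v)=\{v'\}$, we need $a=u'$, $b=v'$. Distinctness of the four cycle vertices gives $u'\neq v'$, and the middle edge gives $u'v'\in E(G)$.

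For the claim on parts, suppose pendant vertices $u_1,\dots,u_4$ lie in four distinct parts, with neighbours $u_1',\dots,u_4'$. Applying the first claim pairwise, the $u_i'$ are pairwise distinct and pairwise adjacent, so $G[\{u_1',\dots,u_4'\}]\cong K_4$. But $K_4$ contains $C_4$, contradicting $C_4$-freeness of $G$. Hence at most three parts contain pendant vertices.

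For connectivity and $diam_p(G)\le 3$, take $x,y$ in distinct parts. If $xy\in E(G)$, then $d(x,y)=1$. Otherwise $xy\in E(K_k^n)\setminus E(G)$, so $G+xy$ contains a $C_4$ through $xy$. This yields an $x$–$y$ path of length three in $G$, so $d(x,y)\le 3$. Two vertices in the same part are connected through any vertex of another part, so $G$ is connected.

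There is no real obstacle. The only point needing care is that the $C_4$ created by adding $uv$ must use the new edge $uv$, because $G$ itself is $C_4$-free; this is what forces the path through $u'$ and $v'$.
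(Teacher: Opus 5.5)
Your proof is correct. It follows the route the paper relies on: the lemma is quoted from \cite{Xu}, the introduction points to exactly your $K_4$ argument, and the connectivity and diameter part is marked as clear. Saturation at the non-edge $uv$ forces the path $u\,u'\,v'\,v$, and pendant vertices in four parts would yield $K_4\supseteq C_4$.

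One presentational point: ruling out $uv\in E(G)$ uses connectivity, and you only establish connectivity in your last paragraph. That argument does not depend on the earlier claims, so the proof is not circular, but it would read better placed first.
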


Let $\ell$ be the number of parts of $G$ containing pendant vertices.
By Lemma \ref{L21} we know that $0 \leq \ell \leq 3$, and the total number of pendant vertices is at most $\ell n$.
Suppose $x_1,x_2$ are pendant vertices in the same part $i$ which have distinct neighbors $y_1$ and $y_2$. Replacing the edge $x_2y_2$ with $x_2y_1$ yields another $C_4$-saturated $k$-partite graph with the same size.
Therefore, we may assume all pendant vertices in the same part share a common neighbor. Let $I=\{v\in V(G):d(v)=1\}$.

Proposition \ref{P22} is a straightforward refinement of Proposition 4.9 in \cite{Xu}, obtained by sharpening the bound on $|A_3|$ from $2n$ to $n_2 + \dots + n_\ell$. The proof's approach, including its notation, forms the foundation of our later analysis. A sketch of proof is provided for completeness.

\begin{prop}\label{P22}
If $\delta(G)=1$, let $V_1 , \ldots ,V_\ell$ be the parts containing pendant vertices. Assume that $|V_i \cap I|=n_i$ with $n \geq n_1 \geq \cdots \geq n_\ell >0$, then $e(G) \geq   \frac{(3k-1)n-(n_2+\cdots +n_\ell) -3}{2} \geq \frac{(3k-\ell)n -3}{2}$.
\end{prop}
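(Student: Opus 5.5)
The plan is to double count degrees, $2e(G)=\sum_{v}d(v)$, and use a charging argument in which the hubs adjacent to pendant vertices pay for most of the low-degree vertices. Following the convention fixed just before the statement, all pendant vertices of $V_i$ share one neighbour $y_i$ for $i\in[\ell]$. By Lemma \ref{L21} the $y_i$ are distinct and pairwise adjacent, and $y_i\notin V_i$. Split $V(G)$ into three sets: $A_1=I$; $A_2$, the non-pendant vertices that are ``cheap'' (degree $2$, or whose excess over $3$ we cannot exploit); and $A_3$, the remaining non-pendant vertices of degree $2$. Give every vertex initial charge $d(v)-3$. Then $2e(G)=3kn+\sum_v (d(v)-3)$, and the goal becomes
\[
\sum_v (d(v)-3)\ \ge\ -n-(n_2+\cdots+n_\ell)-3 .
\]

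First I would bound the contribution of the hubs. The vertex $y_i$ is adjacent to its $n_i$ pendants and to the other $\ell-1$ hubs, so $d(y_i)\ge n_i+\ell-1$ and $y_i$ carries surplus at least $n_i+\ell-4$. Each pendant carries charge $-2$. Pairing the pendants of $V_i$ with $y_i$ leaves a net deficit of about $n_i+3-\ell+1$ per part. For $i=1$ this is about $n_1$, and for $i\ge 2$ it is about $n_i$. Summing, the pendants together with the hubs cost at most roughly $n_1+(n_2+\cdots+n_\ell)+O(1)$. The $O(1)$ term must be tracked carefully through the cases $\ell=1,2,3$ so that it comes out at most $3$.

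The remaining budget of the target is $n-n_1$, which is exactly the number of non-pendant vertices of $V_1$. So the second step is to show that every degree-$2$ vertex outside $V_1$ is either paid for or accounted for by $(n_2+\cdots+n_\ell)$. This is the sharpening over \cite{Xu}: there $|A_3|\le 2n$, and we want $|A_3|\le n_2+\cdots+n_\ell$, while the degree-$2$ vertices inside $V_1\setminus I$ are absorbed by the slack $n-n_1$. The tool is saturation. Let $u$ be a degree-$2$ vertex not in $V_i$, and let $x\in V_i\cap I$ be a pendant not adjacent to $u$. Adding $ux$ must create a $C_4$, and since $x$'s only neighbour is $y_i$, this forces a path $u\,w\,y_i$, so $d(u,y_i)\le 2$. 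Combined with $C_4$-freeness, which says two vertices have at most one common neighbour, this should show the following. Either a neighbour $w$ of $u$ has extra degree, from which $u$ receives charge $1$, for instance by splitting $w$'s surplus among its degree-$2$ neighbours. Or $u$ belongs to a configuration that can be injected into the pendants of $V_2\cup\cdots\cup V_\ell$. I would define the injection by sending each uncompensated degree-$2$ vertex $u$ to a pendant $x$ in some $V_i$ with $i\ge 2$, chosen through the unique $C_4$ created when $ux$ is added.

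I expect the main obstacle to be this injection, that is, proving $|A_3|\le n_2+\cdots+n_\ell$ rather than $2n$. Degree-$2$ vertices that lie in a perfect-matching-like structure (as in the extremal families $\mathcal{Y}_k^n$) have no high-degree neighbour to pay for them beyond the hubs. The difficulty is to show that the hubs' surplus, or the pendants of the smaller parts, genuinely cover them without double counting. I would first try to prove that two uncompensated degree-$2$ vertices mapped to the same pendant would create a $C_4$ through $y_i$. Once the three contributions are combined, the last inequality $\frac{(3k-1)n-(n_2+\cdots+n_\ell)-3}{2}\ge \frac{(3k-\ell)n-3}{2}$ is immediate from $n_i\le n$.
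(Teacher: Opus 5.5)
\textbf{There is a genuine gap in how you pay for the degree-$2$ vertices.} Your reformulation $\sum_v(d(v)-3)\ge -n-(n_2+\cdots+n_\ell)-3$ is fine. The problems are the budget and the local payment rule.

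\emph{The fallback double counts.} Once each $y_i$ has paid $1$ to each of its pendants, the pendants of $V_i$ carry total charge $-n_i$. So the pendants of $V_2,\dots,V_\ell$ already use up the entire term $n_2+\cdots+n_\ell$ of your target. Apart from the hubs' remaining degree, the only slack left is $n-n_1$ plus a constant. Charging a further degree-$2$ vertex to a pendant of $V_2\cup\cdots\cup V_\ell$ counts that pendant twice. In effect you need every non-pendant, non-hub vertex outside $V_1$ to be fully compensated.

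\emph{The local rule does not achieve this.} Your rule is that a neighbour with surplus $d(w)-3>0$ pays. Take $k=5$, $n=2$, $V_1=\{x_1,x_2\}$, $V_2=\{y,v\}$, $V_3=\{a_1,a_2\}$, $V_4=\{b_1,b_2\}$, $V_5=\{c_1,c_2\}$, with the $13$ edges $yx_1$, $yx_2$, $ya_1$, $yb_1$, $ya_2$, $yc_1$, $a_1b_1$, $a_2c_1$, $va_1$, $vb_2$, $a_2b_2$, $b_1c_2$, $b_2c_2$. One checks that this graph is $C_4$-free and $C_4$-saturated relative to $K_5^2$, with $\ell=1$ and $n_1=n$. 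The vertex $v$ has degree $2$, and both its neighbours $a_1,b_2$ have degree $3$. There are no pendants outside $V_1$, and the slack $n-n_1$ is $0$. The same holds for $c_2$. So neither branch of your dichotomy applies, even though the inequality is true here ($13\ge 12.5$). The surplus that pays for $v$ sits at $y$, two steps away, and must be routed through $a_1$.

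\emph{You have also misread the sharpening.} In the paper, $A_3$ is the set of \emph{pendant} vertices at distance $3$ from $x$. Since all pendants of $V_1$ hang on $y$, we get $A_3\subseteq I\setminus V_1$, so $|A_3|\le n_2+\cdots+n_\ell$ is immediate. The bound you call the main obstacle is not what is needed. Under your rule it is false: the example has two uncompensated degree-$2$ vertices while $n_2+\cdots+n_\ell=0$.

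\textbf{The missing idea is to move charge outward along distance layers from a single pendant.}
\begin{itemize}
\item Fix $x\in V_1\cap I$ with $N(x)=\{y\}$, and let $S_i$ be the set of vertices at distance $i$ from $x$. Since $diam_p(G)\le 3$, we have $V(G)=\{x\}\cup S_1\cup\cdots\cup S_4$ with $S_1=\{y\}$ and $S_4\subseteq V_1$.
\item Assign each edge from $y$ to $S_2$, and each edge from $S_2$ to $S_3$, entirely to its endpoint in the higher layer. Split every remaining edge other than $xy$ equally between its ends. The total weight is $e(G)-1$.
\item A vertex of $S_2\setminus V_1$ gets $1$ from $y$. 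Adding its edge to $x$ must close a $C_4$ through $y$, so it has a neighbour in $S_2$ and gets at least $3/2$.
\item A vertex of $S_3$ of degree at least $2$ likewise gets at least $3/2$.
\item The only vertices that can have weight as low as $1$ are those of $S_2\cap V_1$ and $S_4$ (at most $n-1$ in total, all in $V_1\setminus\{x\}$) and the pendants in $S_3$ (at most $n_2+\cdots+n_\ell$).
\end{itemize}
This gives the bound at once. The other hubs $y_i$ simply lie in $S_2$, so no case analysis in $\ell$ is needed.
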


\noindent{\bf Proof. }Let $x \in V_1\cap I$   and $N(x)=\{y\}$. Let $S_i = \{ u \in V(G): d(x,u)=i \, \}$.
By Lemma \ref{L21}, $diam_p(G) \leq 3$. Then $V(G)=\{x\}\cup S_1\cup S_2\cup S_3\cup S_4$, where $S_1=\{y\}$ and  $S_4 \subseteq V_1$.
Let $A_2= S_2 \cap V_1$ and $B_2=S_2 \setminus A_2$. By our assumption, all pendant vertices in $V_1$ would connect to $y$ which implies $d(u) \geq 2$ for any $u \in S_4$.
By Lemma \ref{L21},  $I\setminus V_1\subseteq S_3$. Let $A_3=\{u \in S_3 : d(u)=1\}$ and $B_3=S_3 \setminus A_3$. Then $|A_3| \leq n_2+\cdots + n_\ell$ and  $|S_4|+|A_2| \leq |V_1|-1 = n-1$.

For  $u \in V(G)\setminus\{ x,y\}$, let $M_i(u)=N(u) \cap S_i$ for $i \in [4]$. Clearly if $u \in S_j$ for some $j$, then we have $M_i(u)=\emptyset$ when $|i-j| \geq 2$. Define a weight function $f$ as\,:
\begin{equation*}
f(u)=\left\{
\begin{aligned}
 \; & |M_1(u)| + \frac{1}{2} |M_2(u)| \, , \; & u \in S_2 \, ,\\
 \; & |M_2(u)| + \frac{1}{2} |M_3(u)| + \frac{1}{2}| M_4(u)| \, , \; & u \in S_3 \, , \\\
 \; & \frac{1}{2} |M_3(u)| \, , \; & u \in S_4 \, .
\end{aligned}
\right.
\end{equation*}
Then $\sum_{u \in V(G)\setminus\{ x,y\}} f(u) = e(G)-1$.
\vspace{0.5em}

 Let $u \in B_2$. Then $ux\notin E(G)$ which implies  $u$ and $x$ are connected by a $P_4$ by $G$ being $C_4$-saturated. Then $|N(u)\cap S_2|\ge 1$ and hence $|M_1(u)|+\frac{1}{2}|M_2(u)| \geq \frac{3}{2}$.
 Let $u \in B_3$. Then $d(u) \geq 2$ and we have either $|M_2(u)| \geq 2$ or $|M_2(u)|=1 $, and $|M_3(u)| + |M_4(u)| \geq 1$. Hence for any $u \in B_2 \cup B_3$, $f(u) \geq \frac{3}{2}$. If $u \in A_2$, then $f(u)\ge|M_1(u)|\ge 1$. If $u\in A_3$, then $f(u)\ge|M_2(u)|= 1$.  If $u \in S_4$, then $N(u)\subseteq S_3$ which implies $f(u)\ge \frac{1}{2}|M_3(u)| \geq 1$
 by $d(u) \geq 2$. Hence for any $u \in A_2 \cup A_3 \cup S_4$, $f(u) \geq  1$. Now we have
 \begin{equation*}
\begin{aligned}
e(G) - 1 & = \sum_{u \in V(G)\setminus\{ x,y\}} f(u)
= \sum_{u \in B_2 \cup B_3} f(u) + \sum_{u \in A_2 \cup A_3 \cup S_4} f(u) \\
& \ge \frac{3}{2}(kn-2) - \frac{1}{2} (|A_2|+|A_3|+|S_4|)  \\
& \geq \frac{3}{2}(kn-2) - \frac{1}{2}(n_2+\cdots +n_\ell+ n-1) \\
& =\frac{(3k-1)n-(n_2+\cdots +n_\ell) -5}{2} \geq  \frac{(3k-\ell)n-5}{2}
\end{aligned}
\end{equation*}
and  we are done.\qed
\vspace{0.3em}

\begin{prop}\label{P23} \cite{Xu}
If $\delta(G)=2$ and there exists a vertex of degree 2 not in a triangle, then we have $e(G) \geq \frac{(3k-1)n-4}{2}$.
\end{prop}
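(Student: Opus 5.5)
The plan is to mimic the weight-function argument of Proposition \ref{P22}, but rooted at a degree-2 vertex that is not in a triangle. Fix such a vertex $x$ with $N(x)=\{y,z\}$ and $yz\notin E(G)$. Put $S_i=\{u:d(x,u)=i\}$, so $S_1=\{y,z\}$. Since $diam_p(G)\le 3$ by Lemma \ref{L21}, every vertex outside the part of $x$ lies in $S_1\cup S_2\cup S_3$, and $S_4$ is contained in the part $V_1$ of $x$.

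Define $f$ as before: a vertex $u\in S_2$ gets $|M_1(u)|+\frac12|M_2(u)|$, a vertex $u\in S_3$ gets $|M_2(u)|+\frac12|M_3(u)|+\frac12|M_4(u)|$, and a vertex $u\in S_4$ gets $\frac12|M_3(u)|$. Then
\[
\sum_{u\neq x,y,z} f(u)=e(G)-2-\tfrac12 e(G[S_1])-\ldots
\]
and since $yz\notin E(G)$, the edges inside $S_1$ vanish. This gives
\[
\sum_{u\notin\{x,y,z\}} f(u)=e(G)-2 .
\]

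The main step is to show $f(u)\ge\frac32$ for almost every $u\notin\{x,y,z\}$, using $\delta(G)=2$ and saturation.
\begin{itemize}
\item Take $u\in S_2$ outside $V_1$. Then $ux\notin E(G)$, so adding $ux$ creates a $C_4$, which means there is an $x$--$u$ path of length 3. That path runs $x,y\text{ or }z,w,u$ with $w\in S_2$. So $|M_2(u)|\ge1$ and $f(u)\ge\frac32$.
\item For $u\in S_2\cap V_1$ we only know $d(u)\ge2$. If $|M_1(u)|\ge2$, then $x,y,u,z$ is a $C_4$, a contradiction. Hence $|M_1(u)|=1$ and $|M_2(u)|\ge1$, so again $f\ge\frac32$.
\item For $u\in S_3$, $d(u)\ge2$ gives $f\ge\frac32$ exactly as for $B_3$ in Proposition \ref{P22}.
\item For $u\in S_4$, $d(u)\ge2$ gives only $f\ge1$.
\end{itemize}
The deficient vertices are therefore confined to $S_4\subseteq V_1\setminus\{x\}$, which has at most $n-1$ vertices. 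Summing,
\[
e(G)-2\ \ge\ \tfrac32(kn-3)-\tfrac12(n-1),
\]
so
\[
e(G)\ \ge\ \frac{(3k-1)n-4}{2}.
\]

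The step I expect to be the main obstacle is the $S_2$ accounting. It must be checked that $C_4$-freeness really forbids a vertex of $S_2$ from having both $y$ and $z$ as neighbours; this holds because $x$ is adjacent to both, so such a vertex would close a $C_4$ through $x$. The remaining point is that the contribution $e(G[S_1])=0$ must be justified, and this is exactly where the hypothesis that $x$ is in no triangle enters. If the arithmetic comes out off by a constant, I would first check whether $S_4$ can really have $n-1$ vertices. I would then recover the missing slack from $y$ or $z$ being forced to have a further neighbour, since both have degree at least 2 and are non-adjacent.
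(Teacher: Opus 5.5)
Your strategy is right. It is the natural analogue of the weight-function argument in Proposition \ref{P22}; the paper itself only quotes this statement from \cite{Xu} without proof. The identity $\sum_{u\notin\{x,y,z\}} f(u)=e(G)-2$, the bound $f(u)\ge\frac32$ on $S_2\setminus V_1$ and on $S_3$, and the final arithmetic are all correct. However, your second bullet fails as written. For $u\in S_2\cap V_1$, the facts $|M_1(u)|=1$ and $d(u)\ge 2$ only give $|M_2(u)|+|M_3(u)|\ge 1$. The extra neighbour may lie in $S_3$, and $f$ does not count such neighbours for vertices of $S_2$. Saturation does not force an $S_2$-neighbour here. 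The pair $u,x$ lies in one part, so it imposes nothing. If, say, $M_1(u)=\{y\}$, then the pair $u,z$ is already joined by the path $u\,y\,x\,z$. So these vertices only satisfy $f(u)\ge 1$; they are exactly the set $A_2$ of Proposition \ref{P22}.

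The repair costs nothing. Take $(S_2\cap V_1)\cup S_4$ as the deficient set instead of $S_4$ alone. Both pieces lie in $V_1\setminus\{x\}$, so together they have at most $n-1$ vertices, each with $f\ge 1$. This still gives $e(G)-2\ge\frac32(kn-3)-\frac12(n-1)$, i.e.\ exactly $\frac{(3k-1)n-4}{2}$, so the fallback plan in your last paragraph is unnecessary. Two smaller remarks. First, the hypothesis $yz\notin E(G)$ is also used in your first bullet: it is what stops the middle vertex of the $x$--$u$ path from being the other vertex of $S_1$, which is why that vertex lies in $S_2$. Second, the correction term in your first display should be $e(G[S_1])$, not $\frac12 e(G[S_1])$; it vanishes anyway.
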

\vspace{0.3em}

\begin{lemma}\label{L24} \cite{Xu}
Let $H$ be a connected multipartite graph. If each edge of $H$ is in a triangle, then $e(H) \geq \frac{3}{2}(|H|-1)$.
\end{lemma}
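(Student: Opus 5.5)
The plan is to grow a connected vertex set $S \subseteq V(H)$ step by step, using one triangle at each step. At every stage we keep the invariant
\[
e(H[S]) \ge \tfrac{3}{2}(|S|-1).
\]
The multipartite structure plays no role; we only use that $H$ is connected and that every edge lies in a triangle.

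First, the degenerate case. If $|H|=1$ then $e(H)=0=\frac{3}{2}(|H|-1)$ and there is nothing to prove. Otherwise, since $H$ is connected, it has an edge. That edge lies in a triangle, and we start with $S$ equal to the vertex set of this triangle. Then $e(H[S])=3 \ge \frac{3}{2}(3-1)$, so the invariant holds.

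Inductive step: suppose $S \neq V(H)$. By connectivity there is an edge $uv$ with $u\in S$ and $v\notin S$. By hypothesis $uv$ lies in a triangle $uvw$, and we split into two cases.
\begin{itemize}
\item If $w\in S$, we add $v$ to $S$. The edges $uv$ and $wv$ are new, since each has an endpoint outside the old $S$. So $e(H[S])$ increases by at least $2 \ge \frac{3}{2}$ while $|S|$ increases by $1$.
\item If $w\notin S$, we add both $v$ and $w$. The three edges $uv$, $uw$, $vw$ are all new, so $e(H[S])$ increases by at least $3=\frac{3}{2}\cdot 2$ while $|S|$ increases by $2$.
\end{itemize}
In both cases the invariant is preserved. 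The edges counted in a given step involve a vertex that is new at that step, so no edge is counted twice across steps.

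The process terminates with $S=V(H)$, which gives $e(H)\ge e(H[S]) \ge \frac{3}{2}(|H|-1)$. There is no real obstacle. The only point needing care is that the edges credited at each step are genuinely new, and this holds because each of them has an endpoint not previously in $S$.
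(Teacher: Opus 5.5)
Your proof is correct and complete. Each step gains at least $2\ge\frac32$ edges when one vertex is added, or at least $3=\frac32\cdot 2$ edges when two are added, so the invariant $e(H[S])\ge\frac32(|S|-1)$ is preserved. Because you work with induced subgraphs, every edge meeting a newly added vertex is automatically absent from the previous $H[S]$, so the no-double-counting point is immediate. The degenerate case is also handled: an edge lying in a triangle already forces $|H|\ge 3$.

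Note that the paper does not prove this lemma itself. It is imported from \cite{Xu} and used only as a black box in the proof of Proposition~\ref{P25}, so there is no in-paper argument to compare against.

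Your greedy triangle-growing argument is self-contained, and as you observe it shows the multipartite hypothesis is superfluous. Tracking the slack also gives the equality case. A one-vertex step always loses at least $\frac12$, so equality forces every step to add two vertices with exactly three new edges. Hence $H$ is built by repeatedly attaching a triangle at a single vertex, that is, every block of $H$ is a triangle (e.g.\ a friendship graph). This shows the bound is sharp.
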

\vspace{0.3em}



\begin{prop}\label{P25}
If $\delta(G)=2$ and each vertex of degree 2 is in a triangle, then $e(G) \geq e_k(n)$, where
\begin{equation*}
    e_k(n) := \left\{
    \begin{aligned}
    & \; 5n \, , & \; & k =4\, ; \\
    &  \frac{(3k-3)n}{2} \, , & \; &  k \geq 5   \, . \\
    \end{aligned}
    \right.
    \end{equation*}
\end{prop}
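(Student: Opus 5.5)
The plan is a degree count: reduce the statement to an upper bound on the number of degree-$2$ vertices, and where that bound fails, pay for the excess with the high-degree vertices they force. Let $D=\{u: d(u)=2\}$. Since $\delta(G)=2$,
\[
2e(G)=\sum_u d(u)\ \ge\ 3kn-|D|+\sum_{d(u)\ge 4}(d(u)-3).
\]
So for $k\ge 5$ it suffices to show $|D|\le 3n$ up to a correction covered by the surplus term. For $k=4$ we need the sharper statement that the deficit $|D|$ is at most $2n$ plus the surplus $\sum_{d(u)\ge 4}(d(u)-3)$.

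\textbf{Local structure.} First I record the structure forced by $C_4$-freeness and saturation.
\begin{itemize}
\item Two triangles of $G$ cannot share an edge, since that would give a $C_4$. Hence each $u\in D$ lies in a unique triangle $T_u=uab$ with $N(u)=\{a,b\}$.
\item Two vertices of $D$ cannot have the same neighbourhood $\{a,b\}$, again because that would give a $C_4$.
\item Saturation applies to every non-edge $uw$ with $u\in D$ and $w$ in another part: there must be a path $u\,a\,z\,w$ or $u\,b\,z\,w$. So every such $w$ is at distance at most $2$ from $\{a,b\}$.
\end{itemize}

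\textbf{Discharging.} Each vertex $u\in D$ sends a charge of $1$ (its deficit) to the third vertices of its triangle, and I show the receivers have enough surplus. Let $t\ge 1$ be the number of triangles through a vertex $a$. Since these triangles are edge-disjoint, $d(a)\ge 2t$. For $t\ge 2$ this gives $d(a)-3\ge 2t-3$, which absorbs charge from up to about $2t-3$ triangles. The delicate cases are when $a$ lies in one or two triangles. I expect these to reduce to counting configurations of a triangle carrying two or three degree-$2$ vertices.

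\textbf{Triangles with two or three degree-$2$ vertices.} Suppose $u,v\in D$ lie in the same triangle $uvw$.
\begin{itemize}
\item By the path condition above, every vertex outside the parts of $u,v$ that is not adjacent to $u$ must be reached via $w$ or $v$. Since $N(v)=\{u,w\}$, this forces essentially every vertex to lie within distance $2$ of $w$, through distinct $C_4$-free paths. This yields a lower bound on $e(G)$ of the form $\frac32(|G|-1)$, via an argument similar to Lemma~\ref{L24} applied to the structure around $w$. That bound already exceeds $e_k(n)$.
\item A triangle with three degree-$2$ vertices would be a component on its own, which is impossible because $G$ is connected (Lemma~\ref{L21}).
\end{itemize}
So I may assume each triangle contains at most one vertex of $D$. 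Then $|D|$ is at most the number of triangles, and since the triangles are edge-disjoint, $3|D|\le e(G)$ on the triangle edges alone. Combining this with the degree sum gives a weighted inequality. I would optimise it to get $e(G)\ge \frac{(3k-3)n}{2}$: each degree-$2$ vertex costs half an edge, but its triangle contributes edges whose other endpoints have degree at least $3$ when they are not in $D$.

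\textbf{Main obstacle: $k=4$.} The target $5n$ exceeds $\frac{9n}{2}$, so the generic counting above is not enough. Here I would use saturation between $D$-vertices in different parts. For $u,v\in D$ in distinct parts and nonadjacent, some neighbour of $u$ must be adjacent to some neighbour of $v$. This should force the triangles $T_u$ to cluster around a bounded set of hub vertices, much like the common neighbours $u'v'$ of pendant vertices in Lemma~\ref{L21}. Those hubs then have degree linear in $n$, and their surplus $\sum(d-3)$ supplies the missing $\frac n2$. What I would try first is to show that $D$ meets at most two parts fully. Failing that, I would show that the hubs adjacent to the vertices of $D$ in a third part have total surplus at least the number of those vertices.
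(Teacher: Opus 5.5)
\textbf{There is a genuine gap: the reduction step rests on a false bound, and the $k\ge5$ counting cannot reach the target.}

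\textbf{The reduction is false.} Your reduction to ``at most one degree-2 vertex per triangle'' relies on the claim that a triangle $uvw$ with $d(u)=d(v)=2$ forces $e(G)\ge\frac32(|G|-1)$. Here is a counterexample. Take $k=4$, $n=2$, with $V_1=\{w,z_1\}$, $V_2=\{y,u_1\}$, $V_3=\{v_1,u_2\}$, $V_4=\{v_2,z_2\}$. Let $G$ consist of the triangles $wu_1v_1$, $wu_2v_2$, $yz_1z_2$ together with the bridge $wy$. This graph has the following properties.
\begin{itemize}
\item It is $C_4$-free.
\item Every non-adjacent pair in different parts is joined by a path of length $3$ through $w$ or $y$ (e.g.\ $u_1v_1wu_2$, $z_1ywu_1$, $ywu_1v_1$, $wyz_1z_2$).
\item $\delta(G)=2$, and every degree-2 vertex lies in a triangle.
\end{itemize}
Yet $e(G)=10<\frac{21}{2}$. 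It has exactly $e_4(2)=10$ edges, so it is consistent with the proposition; only your intermediate bound fails. The underlying reason is that Lemma~\ref{L24} needs every edge to lie in a triangle. Bridges such as $wy$ violate this, so ``an argument similar to Lemma~\ref{L24} around $w$'' cannot deliver $\frac32(|G|-1)$.

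\textbf{The $k\ge5$ optimisation cannot work with local data.} Even granting the reduction, the step ``optimise the weighted inequality'' fails with the data you feed it. From $2e\ge 3kn-|D|$ and $e\ge 3|D|$ you only get $e\ge\frac{9kn}{7}$. Adding $d(a)\ge\max\{3,2t_a\}$, you still cannot beat $\frac{4kn}{3}$. A graph made of vertex-disjoint triangles, each with one degree-2 vertex and two degree-3 vertices (the third edges running between triangles), satisfies all of these local constraints with $e=\frac{4kn}{3}$. Since $\frac{4kn}{3}<\frac{3(k-1)n}{2}$ for $k\ge 10$, some global use of saturation is unavoidable. Your proposal only gestures at one, through the speculative ``hubs'' for $k=4$, and never uses it for $k\ge5$.

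\textbf{The $k=4$ obstacle is misplaced.} For $k=4$, the two inequalities above already give $e\ge\frac{36n}{7}>5n$. So the case you flag as the main obstacle would be immediate once the reduction were valid. The real difficulties are triangles with two degree-2 vertices, and large $k$.

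\textbf{How the paper handles both at once.}
\begin{itemize}
\item It takes the union $H$ of all triangles through degree-2 vertices, with components $H_1,\dots,H_m$.
\item It bounds $e(H_i)\ge\frac32(|H_i|-1)$ by Lemma~\ref{L24}, which applies because every edge of $H$ lies in a triangle by construction.
\item Saturation forces an edge between $H_i$ and $H_j$ whenever they contain degree-2 vertices in different parts, since those vertices' neighbourhoods lie inside $H_i$ and $H_j$. This gives at least $\binom{m}{2}-\sum_i\binom{m_i}{2}$ further edges.
\item A convexity estimate caps the total loss at $\frac32 n$.
\item Vertices outside $H$ have degree at least $3$ and contribute $\frac32$ each.
\end{itemize}
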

\noindent {\bf Proof. } Let $W=\{v\in V(G):d(v)=2\}$.
We first prove that $e(G) \geq  \frac{(3k-3)n}{2}$ for all $k \geq 4$.

Let $H$ be the subgraph of $G$ that consists of all triangles contained at least one vertex  in $W$, and let $H_1,\dots,H_m$ be the components of $H$.
Let $u \in V(H_i)\cap W$ and $v \in V(H_j)\cap W$ if $m\ge 2$, where $i,j\in\{1,\ldots,m\}$ and $i\not=j$. If $u$ and $v$  belong to different parts of $G$, then $u$ and $v$ are connected by a $P_4$ by $G$ being $C_4$-saturated which implies
there is at least one edge between $H_i$ and $H_j$.

For each $i \in [k]$, let $m_i=|\{j:W\cap V(H_j)\subseteq V_i, j\in [m]\}|$. Let $m^{*}=m_1+\cdots+m_k$.
Clearly, $0 \le m_i \le n$ for $1\le i\le k$, $m^{*} \le m$, and there are at most $M = \sum_{i=1}^k \binom{m_i}{2}$ pairs of components of $H$ which are non-adjacent in $G$.
By Lemma \ref{L24}, we have $e(H_i) \geq \frac{3}{2}(|H_i|-1)$ for $1\le i\le m$. Hence
\begin{equation*}
\begin{aligned}
      e(G[V(H)]) \geq & \sum_{i=1}^m \frac{3}{2}(|H_i|-1) + \binom{m}{2} - \sum_{1 \leq i \leq k} \binom{m_i}{2}  \\
      = &  \sum_{i=1}^m \frac{3}{2}|H_i| + \binom{m}{2}   - \sum_{1 \leq i \leq k} \binom{m_i}{2}  -\frac{3m}{2}=\,: \mathscr{T}.
\end{aligned}
\end{equation*}

Assume that $m=rn+s$ for some non-negative integer $r \geq 0$ and $ 0 \leq s \leq n-1$. Since $m_1+...+m_k=m^* \leq m$, $m_i \leq n$ and the combinatorial binomial is a convex function, we have
\begin{equation*}
\begin{aligned}
\binom{m}{2} - \sum_{1 \leq i \leq k} \binom{m_i}{2}-\frac{3m}{2} & \geq \frac{(rn+s)(rn+s-1)}{2} - r \cdot \frac{n(n-1)}{2}-\frac{s(s-1)}{2} - \frac{3(rn+s)}{2} \\
& = \frac{(r^2-r)n^2-3rn+(2rn-3)s}{2} =:\mathscr{R} \, .
\end{aligned}
\end{equation*}

If $r=0$, then we have $\mathscr{R}= - \frac{3}{2}s > - \frac{3}{2}n$. If $r \geq 1$, we have (note that $2rn-3 \geq 0$ as $n \geq 2$ and $r \geq 1$)
\begin{equation*}
\begin{aligned}
& \mathscr{R} \geq \frac{(r^2-r)n^2-3rn}{2}  \geq \frac{2(r^2-r)n-3rn}{2} = \frac{(2r^2-5r)n}{2} \geq -\frac{3}{2}n.
\end{aligned}
\end{equation*}
Hence we have $\mathscr{T} = \sum_{i=1}^m \frac{3}{2}|H_i|+ \binom{m}{2} - \sum_{1 \leq i \leq k} \binom{m_i}{2} -\frac{3m}{2} \geq  \frac{3}{2}|H|-\frac{3}{2}n$.
	Since  $W\subseteq V(H)$,  for any $u \in V(G) \setminus V(H)$, we have $d(u) \geq 3$. So we have $e(G) \geq e(G[V(H)])+ \frac{3}{2}(kn-|H|)\ge \mathscr{T}+ \frac{3}{2}(kn-|H|) = \frac{(3k-3)n}{2}$.

    Next we show that $e(G) \geq 5n$ when $k =4$. Suppose  $e(G) \leq 5n-1$. Since $G$ is connected, $|G|=4n$ and $e(G) \leq 5n-1$, the number of triangles containing at least one vertex  in $W$ is at most $n$ which means $m \leq n$.
    Since $\sum_{u \in V(G)} d(u) \leq 10n-2$,  $|W|\ge 2n$. Hence $m_i < m$ for any $1\le i\le k$ and then
    \begin{equation*}
        \begin{aligned}
        \binom{m}{2} - \sum_{1 \leq i \leq k} \binom{m_i}{2}-\frac{3m}{2} & \geq \binom{m}{2} - \binom{m-1}{2}-\frac{3m}{2} \\
        & = -\frac{1}{2}m-1 \geq -n \,.
        \end{aligned}
    \end{equation*}
    By the same argument as above, we have $e(G) \geq \frac{3}{2}(4n-|H|) + \frac{3}{2}|H|-n =5n$ which is a contradiction. \qed

\vspace{0.8em}

\section{Proofs of the Main Results}

    In this section we consider $k \geq 4, n \geq 2$.
    Let $G$ be a $C_4$-saturated $k$-partite graph with $n$ vertices in each part such that $e(G)=sat(K_k^n,C_4)$. By the constructions we know that
\begin{equation*}
e(G) \leq  f_k(n) := \left\{
\begin{aligned}
& \; 5n-1 \, , & \; & k =4\, ; \\
& \left\lfloor \frac{3(k-1)n-2}{2} \right\rfloor \, , & \; &  k \geq 5   \, . \\
\end{aligned}
\right.
\end{equation*}
\vspace{0.0em}
Let  $I=\{v\in V(G):d(v)=1\}$ and $\ell=|\{j:I\cap V_j\not=\emptyset,j\in[k]\}|$.
\vspace{0.3em}

\begin{lemma}\label{L31} For $k=4$ we have $2 \leq \ell \leq 3$, and for $k \geq 5$ we have $\ell =3$. Moreover, if $\ell =3$, then all pendant vertices in the same part of $G$ must have the same neighbor.
\end{lemma}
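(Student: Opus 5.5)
We argue by comparing lower bounds from Section 2 with the upper bound $e(G)\le f_k(n)$. Throughout, $G$ is extremal, so $e(G)\le f_k(n)$.

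\emph{Step 1: $\delta(G)=1$, i.e.\ $\ell\ge 1$.} Since $G$ is connected (Lemma \ref{L21}) and $n\ge 2$, $\delta(G)\ge 1$. Suppose $\delta(G)\ge 2$. If some degree-2 vertex lies in no triangle, Proposition \ref{P23} gives $e(G)\ge \frac{(3k-1)n-4}{2}$. For $k=4$ this is $\frac{11n-4}{2}>5n-1$, and for $k\ge5$ it exceeds $\frac{3(k-1)n-2}{2}$ because $2n-4>-2$. If every degree-2 vertex lies in a triangle, Proposition \ref{P25} gives $e(G)\ge e_k(n)$, and $e_k(n)>f_k(n)$ in both cases. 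If $\delta(G)\ge3$, then $e(G)\ge \frac{3kn}{2}$, which is even larger. Each case contradicts $e(G)\le f_k(n)$, so $\delta(G)=1$ and $\ell\ge1$.

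\emph{Step 2: bounding $\ell$ from below.} With $\ell\ge1$, Proposition \ref{P22} gives $e(G)\ge \frac{(3k-\ell)n-3}{2}$. For $k\ge5$, $\ell\le 2$ would give $e(G)\ge\frac{(3k-2)n-3}{2}$. This exceeds $\frac{3(k-1)n-2}{2}$ since $n-3>-2$ for $n\ge2$. Combined with $\ell\le3$ from Lemma \ref{L21}, this forces $\ell=3$.

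For $k=4$, $\ell=1$ would give $e(G)\ge\frac{11n-3}{2}>5n-1$, because $n/2-3/2>-1$ holds for $n\ge 2$. Hence $2\le\ell\le3$.

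\emph{Step 3: the "moreover" part.} This is the delicate step. The remark before Proposition \ref{P22} shows that we may \emph{assume} pendant vertices in a part share a neighbour, but the statement demands that they \emph{must}. So assume $\ell=3$, and that $x_1,x_2\in V_1\cap I$ have distinct neighbours $y_1\ne y_2$. Let $z\in V_2\cap I$ and $w\in V_3\cap I$ be pendant vertices with neighbours $z'$ and $w'$. Lemma \ref{L21} applied to the pairs $(x_1,z)$ and $(x_2,z)$ gives $z'\ne y_1,y_2$ and $z'y_1,z'y_2\in E(G)$. Likewise $w'\notin\{y_1,y_2,z'\}$, $w'y_1,w'y_2\in E(G)$, and $w'z'\in E(G)$. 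Then $y_1z'y_2w'y_1$ is a $C_4$, which is a contradiction. Hence all pendant vertices in the same part share a neighbour.

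This argument is direct and needs no edge counting. The only care required is checking that the four vertices are distinct, which follows from the $u'\ne v'$ clause of Lemma \ref{L21}. I expect the main obstacle to be ensuring the arithmetic in Step 2 is strict for all $n\ge2$, especially the $k\ge5$ floor comparison.
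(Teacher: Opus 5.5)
There is a gap in Step~1. Your Steps~2 and~3 are correct and are exactly the paper's argument: you compare Proposition~\ref{P22} with $f_k(n)$, and you build a $C_4$ from the two distinct neighbours in $V_1$ and the neighbours of the pendant vertices in the other two parts.

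The problem is in the branch where some degree-$2$ vertex lies in no triangle. For $k=4$ you assert $\frac{11n-4}{2}>5n-1$, but this is equivalent to $n>2$; at $n=2$ both sides equal $9$. So for $K_4^2$, Proposition~\ref{P23} only gives $e(G)\ge 9=f_4(2)$. Your argument therefore does not exclude an extremal $G$ with $\delta(G)=2$, i.e.\ $\ell=0$, which would contradict $2\le \ell$. The comparisons you flagged as delicate, in Step~2, are all strict; this one is not. The paper treats precisely this boundary case separately, by asserting a direct check that no $C_4$-saturated subgraph of $K_4^2$ has $\delta=2$ and $9$ edges.

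You need to supply that check, and it can be done by hand. Use the fact that two non-adjacent vertices joined by no path with three edges must lie in the same part. A candidate $G$ is connected with $8$ vertices, $\delta(G)=2$ and $e(G)=|G|+1$. Hence it is one of three types, each with no $4$-cycle:
\begin{itemize}
\item two cycles sharing a vertex;
\item two disjoint cycles joined by a path;
\item a theta graph.
\end{itemize}
The first two types leave only three graphs: a triangle and a hexagon sharing a vertex, a triangle and a pentagon joined by an edge, and two triangles joined by a path of length $3$. In each, some vertex is at distance at least $4$ from both ends of an edge. That vertex would have to share a part with two adjacent vertices, which is impossible, consistent with $diam_p(G)\le 3$. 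For the theta graphs, the path lengths must be $(3,3,3)$, $(2,3,4)$, $(1,2,6)$ or $(1,4,4)$. In the first three cases the rule above forces three vertices into one part of size $2$. In the case $(1,4,4)$ it forces the two adjacent branch vertices into the same part. With that case added, your proof is complete.
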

\noindent{\bf Proof. } If $\delta(G) \ge 2$, then Propositions \ref{P23} and \ref{P25} yield $e(G) \ge \min\{\frac{(3k-1)n-4}{2}, e_k(n)\} \geq f_k(n)$, where the equality holds iff $k=4$ and $n=2$. The authors have checked that there is no $C_4$-saturated quadripartite graph $G$ with $n=2$, $\delta(G)=2 $ and $e(G)=9$.
Thus, we must have $\delta(G)=1$. By Proposition \ref{P22}, this implies $e(G) \ge \frac{(3k-\ell)n-3}{2}$. Consequently, $\ell$ satisfies $2 \le \ell \le 3$ when $k=4$, and $\ell = 3$ when $k \ge 5$.

Assume $\ell=3$. By symmetry we can assume that $u_1 \in V_1\cap I$, $v \in V_2\cap I$ and $w \in V_3\cap I$ such that $u_1u_1^*,vv^*,ww^* \in E(G)$.
Suppose there exists $u_2 \in V_1$ such that $u_2u_2^* \in E(G)$ with $u_2^* \neq u_1^*$. Since $u_i$ and $z$ are connected by a $P_4$ for $i \in [2]$ and $z \in \{v,w\}$, we have $u_i^*z^* \in E(G)$.
Thus $u_1^*v^*u_2^*w^* u_1^*$ is a $C_4$ in $G$, a contradiction. \qed
\vspace{0.3em}

\subsection{Non-quadripartite graph}

Let $k \geq 5$ and $n \geq 2 $. By Lemma \ref{L31} we have $\ell=3$.
Assume that  $I\subseteq V_{1} \cup V_{2} \cup V_{3}$. By Lemma \ref{L31}, we assume $N(u)=\{y_i\}$ for any $ u \in V_{i} \cap I$, where $ i\in [3]$.
Let $X^*=\{y_1,y_2,y_3\}$. By Lemma \ref{L21} we have  $G[X^*]=K_3$.
Let $n_i = |V_{i} \cap I|$ and we may assume $n \geq n_1 \geq n_2 \geq n_3 >0$.
\vspace{0.5em}

\noindent{\bf Proof of Theorem \ref{T111}.} If $|I| \leq 3n-2$, then $n_2+n_3 \leq 2n-2$.
By Proposition \ref{P22} we have $e(G) \geq \frac{(3k-3)n-1}{2} > f_k(n)$, a contradiction.
Therefore, $|I| \geq 3n-1$ and $n_1=n_2=n$. Pick $x_i \in V_i \cap I$.
Following the notation of Proposition \ref{P22}, we let $x = x_1$ and $y = y_1$. Then $V_1\setminus\{x\}\subseteq A_2 $ by Lemma \ref{L31}. Thus $S_4 = \emptyset$.
Suppose there is $u \in V(G)\setminus\{y_1\}$ such that $|N(u) \cap S_2|\, \geq 2$, say $a_1,a_2\in N(u) \cap S_2$. Then $u a_1 y a_2 u$ is a $C_4$ in $G$, a contradiction.
Therefore, for any $u \in V(G)\setminus\{y_1\}$, $|N(u) \cap S_2|\, \leq 1$.
We divide the proof into two cases.

\noindent{\bf Case 1. }$(k-1)n \equiv 1 \; (\mod \; 2 \,)$, i.e. $k$ is even and $n$ is odd.

In this case, by  Proposition \ref{P22} and $e(G)\le f_k(n)$, we have $e(G)  = sat(K_k^n,C_4) = \frac{3(k-1)n-3}{2}$ and $n_2+n_3=2n$. So  $|I|=3n$ which implies $I= V_{1} \cup V_{2} \cup V_{3}$. Clearly for any $u \in B_2 \cup B_3$ we have $f(u)=\frac{3}{2}$, and for any $u \in A_2 \cup A_3$ we have $f(u)=1$. Still it is obvious that $y_2,y_3 \in B_2$ and $N(A_3)=\{ y_2,y_3\}$.
Since $f(u)=\frac{3}{2}$ for any vertex $ u \in B_2\cup B_3$ and $S_4 = \emptyset$, each of $G[B_2]$ and $G[B_3]$ is a perfect matching where $G[B_2]$  contains the edge $y_2y_3$. Thus $N(b)\cap \{y_2,y_3\}=\emptyset$ for any $b \in B_2 \setminus \{y_2,y_3\}$.
\vspace{0.3em}

\noindent{\bf Claim 1. } Let $u,v \in B_3$ and $uv \in E(G)$. Then $N(u) \cap B_2 = N(v) \cap B_2 =\{y_i\}$ for some $i\in \{2,3\}$. \\
\noindent {\bf Proof of Claim 1. }Since $u,v \in B_3$, $u,v\in \cup_{i=4}^kV_i$. Note that $f(u)=f(v)=\frac{3}{2}$. Then $|N(u)\cap B_2|=|N(v)\cap B_2|=1$.
Let $U=N(\{u,v\}) \cap \{y_2,y_3\}$.
    If $|U|=0$, then $d(u,x_i) \geq 4$ for $i \in \{2,3\}$ (the shortest path connecting $x_i$ and $u$ is $u b y_1 y_i x_i$ for some $b \in B_2 \setminus \{y_2,y_3\}$), a contradiction to $diam_p(G) \leq 3$.
    Suppose $|U|=2$.
    Assume $N(u) \cap B_2 = \{y_2\}$ and $N(v) \cap B_2 =\{y_3\}$. Then $uy_2y_3vu$ is a $C_4$ in $G$, a contradiction.
    So we have $|U|=1$ and we can assume $N(u) \cap B_2 =\{ y_2\}$.
    If $ N(v) \cap B_2=\{z\}$ with $ z \neq y_2$, then there is no $P_4$ connecting $x_2$ and $u$ in $G$ which implies $u \in V_2$, a contradiction. \q
\vspace{0.3em}

By the discussion above, $G[B_2 \cup B_3 \setminus \{y_2,y_3\}]$ is a perfect matching.
For any $ u,v \in (B_2 \cup B_3) \setminus \{y_2,y_3\}$ with $uv \in E(G)$, we have $d(u)=d(v)=2$ and $ \emptyset \neq N(u) \cap N(v) \subseteq X^*=\{y_1,y_2,y_3\}$. Thus, $G$ must be isomorphic to some graph in $\mathcal{Y}_k^n$.
\vspace{0.8em}

\noindent{\bf Case 2. }$(k-1)n \equiv 0 \; (\mod \; 2 \,)$.

In this case, by  Proposition \ref{P22} and $e(G)\le f_k(n)$, we have $e(G)  = sat(K_k^n,C_4) = \frac{3(k-1)n-2}{2}$. By the proof of Proposition \ref{P22}, one of the following holds\,:

(i) $|I|=3n-1$\,, and $f(u)= \frac{3}{2}$ for any $u \in B_2 \cup B_3$;

(ii) $|I|=3n$, and there exists exactly one $u^* \in B_2 \cup B_3$ such that $f(u^*)=2$ and for any $u \in (B_2 \cup B_3) \setminus \{u^*\}$,  $f(u)= \frac{3}{2}$.

If (i) holds, by the same argument as the proof in Case 1, we can show that $G$ is isomorphic to some graph in $\mathcal{Y}_k^n$. We now assume that (ii) holds. Then $n_3=n$.

As $f(u^*)=2$, either (1) $u^* \in B_2$ with $|N(u^*) \cap B_2|=2$, or (2) $u^* \in B_3$ with $|N(u^*) \cap B_2|=2$ and $|N(u^*) \cap B_3|=0$, or (3) $u^* \in B_3$ with $|N(u^*) \cap B_2|=1$ and $|N(u^*) \cap B_3|=2$.
The first two cases would each lead to a copy of $C_4$, a contradiction.
Thus, only case (3) needs to be considered.
Let $u_1,u_2 \in N(u^*)\cap B_3$.
Then $N(u_1) \cap N(u_2) \cap B_2 =\emptyset$;  otherwise there is a $C_4$ in $G$. Since $n_3 =n $, we have $u^*, u_1,u_2 \not \in V_1 \cup V_2 \cup V_3$. Let $U=N(\{u_1,u_2\}) \cap \{y_2,y_3\}$.
If $|U| \, \leq 1$, assume that $N(u_1) \cap \{y_2,y_3\} = \emptyset$, then either $d(x_2,u_1) \geq 4$ (if $y_2 \not \in N(u^*)$) or $d(x_3,u_1) \geq 4$ (if $y_3 \not \in N(u^*)$) which is a contradiction to $diam_p(G) \leq 3$.
Suppose $|U|=2$. Assume that $N(u_1) \cap B_2 = \{y_2\}$ and $N(u_2) \cap B_2 = \{y_3\}$. Then $N(u^*) \cap B_2 \not \subseteq \{y_2,y_3\}$; otherwise there exists a $C_4$.
Then there exists no $P_4$ connecting $u_i$ and $x_{i+1}$ in $G$ for $i \in [2]$, a contradiction with $G$ being $C_4$-saturated. Hence there exists no extremal graph satisfying condition (ii) and we are done. \qed

\subsection{Quadripartite graph}

Let $k=4$ and $n \geq 2$. Recall that $e(G) \leq f_4(n)=5n-1$. By Lemma \ref{L31} we know that $2 \leq \ell \leq 3$. We first consider the case $\ell=3$.

\begin{prop}\label{nl32}
    When $\ell=3$, we have $e(G)=5n-1$ and $G$ is isomorphic to some graph in $\mathcal{F}_1^n$.
\end{prop}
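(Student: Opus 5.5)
The plan is to rerun the weighting argument of Proposition \ref{P22} with $k=4$, sharpening it using the triangle $X^*$, and then to read off the structure from the equality case. By Lemma \ref{L31} the pendant vertices lie in $V_1,V_2,V_3$, and all pendant vertices of $V_i$ share the neighbour $y_i$. By Lemma \ref{L21}, $X^*=\{y_1,y_2,y_3\}$ spans a triangle and $X^*\subseteq V_2\cup V_3\cup V_4$ after relabelling. Put $x=x_1\in V_1\cap I$ and $y=y_1$, and take $S_i,A_2,A_3,B_2,B_3,S_4,f$ as in Proposition \ref{P22}.

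First I record the extra rigidity available when $\ell=3$.
\begin{itemize}
\item Since $S_2=N(y_1)\setminus\{x\}$, any vertex $u\neq y_1$ with two neighbours in $S_2$ would create a $C_4$ through $y_1$. Hence $|N(u)\cap S_2|\le 1$ for every $u\neq y_1$, exactly as in the proof of Theorem \ref{T111}.
\item Consequently every $u\in B_3$ has $|M_2(u)|=1$, so $f(u)=\tfrac32$ forces exactly one neighbour in $S_3\cup S_4$.
\item Every $u\in B_2$ has $f(u)=1+\tfrac12|M_2(u)|\in\{1,\tfrac32\}$; the value $\tfrac32$ is forced because $u$ needs a $P_4$ to $x$.
\item Since $y_2,y_3\in S_2$ are adjacent, they are matched to each other inside $S_2$.
\end{itemize}
The general inequality from Proposition \ref{P22} is
$$e(G)\ge 6n-2-\tfrac12\bigl(|A_2|+|A_3|+|S_4|\bigr),$$
with $|A_2|+|S_4|\le n-1$ and $|A_3|\le n_2+n_3$. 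This only yields $e(G)\ge 5n-\tfrac52$, so the upper bound $e(G)\le 5n-1$ leaves a slack of roughly $\tfrac32$. The core of the proof is to show that this slack can be absorbed only when $|I|=|I_1|+|I_2|+|I_3|=2n-1$.

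I would split on $|I|$. If $|I|\ge 2n$, then at least one of $V_2,V_3$, say $V_2$, is almost entirely pendant. I would then show that some vertex outside $I$ carries extra weight, which I expect to be the main obstacle. The candidates are:
\begin{itemize}
\item a vertex of $S_4$ with degree at least $3$;
\item a vertex of $B_2\cup B_3$ with $f\ge 2$;
\item a parity defect: the vertices of $B_2\setminus\{y_2,y_3\}$ and of $B_3$ with $f=\tfrac32$ must be perfectly matched inside $S_2$, respectively inside $S_3\cup S_4$.
\end{itemize}
Each of these adds at least $\tfrac12$, and I need enough of them to exceed $5n-1$. To find them I would mirror Claim 1 and Case 2 of Theorem \ref{T111}. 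For an edge $uv$ of the matching, the neighbours of $u$ and $v$ in $S_2$ must both equal some $y_i$ with $i\in\{2,3\}$; otherwise either $diam_p(G)\le 3$ fails (a pendant vertex $x_i$ is at distance $4$) or some $P_4$ to a pendant vertex is missing. Because the pendant parts now occupy most of $V_2\cup V_3$, the matched non-pendant vertices are squeezed into few parts. I expect the need for $P_4$'s to pendants of all three parts to force, in this case, either a $C_4$ or an extra edge. So $|I|\ge 2n$ should contradict $e(G)\le 5n-1$, and the distinction between $S_4$ and $A_2$ within $V_1$ has to be handled carefully here.

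If $|I|\le 2n-1$, the counting gives $e(G)\ge 5n-1$, hence equality holds. Then every inequality is tight:
\begin{itemize}
\item $|I|=2n-1$;
\item every vertex of $B_2\cup B_3$ has $f=\tfrac32$;
\item every vertex of $S_4$ has degree $2$ with both neighbours in $S_3$.
\end{itemize}
Setting $V^*=V(G)\setminus(I\cup X^*)$, tightness says each $u\in V^*$ has degree $2$, with exactly one neighbour in $X^*$ and one in $V^*$, and $G[V^*]$ is a perfect matching. Repeating the argument of Claim 1 gives $\emptyset\ne N(u)\cap N(v)\subseteq X^*$ for every matching edge $uv$. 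Finally, a matching edge $uv$ with both ends in $V_1\cup V_2\cup V_3$ would leave a pendant vertex of one of those parts with no $P_4$ to $u$ or $v$, so every matching edge meets $V_4$. This is precisely the definition of $\mathcal{F}_1^n$.
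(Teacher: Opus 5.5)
Your proposal has a genuine gap, and it sits in the counting step. First, the arithmetic is off. With $|A_2|+|S_4|\le n-1$ and $|A_3|\le n_2+n_3\le 2n$, the inequality from Proposition \ref{P22} gives only $e(G)\ge \frac{9}{2}n-\frac{3}{2}$, not $5n-\frac{5}{2}$, so the slack is linear in $n$.

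More importantly, your claim that for $|I|\le 2n-1$ ``the counting gives $e(G)\ge 5n-1$'' silently replaces the deficit set $A_2\cup A_3\cup S_4$ by $I\setminus\{x\}$. But $A_2\cup S_4$ contains every vertex of $V_1\setminus\{x\}$ outside $S_3$, pendant or not, and non-pendant ones can have $f=1$. Two examples:
\begin{itemize}
\item a vertex of $A_2$ whose other neighbours all lie in $S_3$;
\item a degree-$2$ vertex $u\in S_4$ adjacent to some $a\sim y_2$ and $b\sim y_3$.
\end{itemize}
The second configuration is $C_4$-free and already provides $P_4$'s from $u$ to $x_2,x_3,y_1,y_2,y_3$. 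With $a\in V_2$ and $b\in V_3$ of degree $2$ one even has $f(a)=f(b)=\frac{3}{2}$, so there is no local surplus. Your own bounds then give only $e(G)\ge 6n-2-\frac{1}{2}(3n-2-n_1)$, which is below $5n-1$ unless $n_1=n$. Such deficits must be paid for by a global charging argument, and you do not supply one.

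Without that argument neither the bound nor your equality analysis goes through. Your tightness list still permits degree-$2$ vertices in $S_4$, which have no neighbour in $X^*$, yet you conclude that every vertex of $V^*$ has one. The case $|I|\ge 2n$ is only an expectation, not an argument. Also, ``$X^*\subseteq V_2\cup V_3\cup V_4$ after relabelling'' is not available at the start: the triangle may miss $V_4$, and that must be excluded by the count.

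The missing idea is to centre the argument on $V_4$, the one part with no pendant vertices, instead of on $x_1$. No vertex outside $X^*$ has two neighbours in $X^*$. Hence each $u\in V_4\setminus X^*$, which needs $P_4$'s to $x_1,x_2,x_3$, either has one $X^*$-neighbour $y_i$ together with a common neighbour with $y_i$ in $V_1\cup V_2\cup V_3$, or has at least three neighbours there adjacent to $y_1,y_2,y_3$ respectively. Moreover, degree-$2$ vertices of $V_4\setminus X^*$ have pairwise distinct such partners, and these partners are non-pendant.

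This gives the two bounds you need:
\begin{itemize}
\item when $|I|\ge 2n$, a direct count gives $e(G)\ge 3+|I|+3|V_4\setminus X^*|\ge 5n$;
\item in general, a weighting in which every vertex of $V(G)\setminus X^*$ receives at least $1$ and every vertex of $V_4\setminus X^*$ at least $2$ gives $e(G)-3\ge (4n-3)+|V_4\setminus X^*|\ge 5n-4$.
\end{itemize}
Equality then forces:
\begin{itemize}
\item $|V_4\setminus X^*|=n-1$, so $X^*$ meets $V_4$;
\item no vertex of degree at least $3$ in $V_4\setminus X^*$;
\item by connectivity, no vertex of $V_1\cup V_2\cup V_3$ without an $X^*$-neighbour;
\item a perfect matching between $V_4\setminus X^*$ and the non-pendant vertices adjacent to $X^*$.
\end{itemize}
This is exactly the structure of $\mathcal{F}_1^n$.
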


\noindent {\bf Proof. }Assume that $I \cap V_i= I_i \neq \emptyset$ for $ i \in [3]$.  By Lemma \ref{L31}, we assume $N(u)=\{y_i\}$ for any $ u \in V_{i} \cap I$, where $1\le i\le 3$.
Let $X^*=\{y_1,y_2,y_3\}$. By Lemma \ref{L21} we have  $G[X^*]=K_3$. Let $x_i \in I_i$ for $ i \in [3]$.
Let $V^*=\left( \bigcup \limits_{i=1}^3 V_i \right) \setminus X^*$ and $V_4^* = V_4 \setminus X^*$. Then $|V_4^*|\ge n-1$. Set $ V^*_+ = \{ u \in V^* : N(u) \cap X^* \neq \emptyset\}$  and $V_-^* = V^* \setminus V^*_+ $.
\vspace{0.5em}
	
\noindent {\bf Claim 1. }For any $u \not \in X^*$, $|N(u) \cap X^*| \leq 1$.

\noindent {\bf Proof of Claim 1. }	Suppose there is $u \not \in X^*$ such that $|N(u) \cap X^*| \geq 2$, say  $y_1,y_2 \in N(u) \cap X^*$. Then  $u y_1 y_3 y_2 u$ is a $C_4$ in $G$, a contradiction. \q
\vspace{0.5em}
	
\noindent {\bf Claim 2. }For any $u \in V_4^*$, on of the following holds:

 (i) $|N(u) \cap X^*|\, =1 $, $|N(u) \cap V_+^*|\, \geq 1$ and  there  exists $v \in N(u) \cap V_+^*$ such that $ N(v) \cap X^*= N(u) \cap X^*$;
 
 (ii) $|N(u) \cap X^*| \, =0 $, $|N(u) \cap V_+^*| \, \geq 3$ and there exists distinct $v_i \in N(u) \cap V_+^*$ such that $N(v_i) \cap X^*= \{y_i\}$ for $ i \in [3]$.
	
\noindent {\bf Proof of Claim 2. }By Claim 1,  $|N(u) \cap X^*|\, \leq 1 $ for any $u \in V_4^*$.
If $N(u) \cap X^* = \{y_i\} $ for some $ i \in [3]$, then there exists a $P_4= x_i y_i v u$ connecting $x_i$ and $u$ by $G$ being $C_4$-saturated, where $v \in V_+^*$. Then $ N(v) \cap X^*= N(u) \cap X^* = \{y_i\}$. If $N(u) \cap X^* = \emptyset$, there must exist $P_4= x_i y_i v_i u$ connecting $x_i$ and $u$ for some $v_i \in V_+^*$ such that $ N(v_i) \cap X^* = \{y_i\}$ for any $ i \in [3]$. By Claim 1, $v_i\not=v_j$ for $ i,j \in [3]$ with $i\not=j$ \q
\vspace{0.5em}
	
By Claim 1, for any $u \in V_{+}^*$ we have $|N(u) \cap X^*|\, =1$. Let $ V^*_{4-} = \{ u \in V_4^* : d(u)=2\}$  and $V_{4+}^* = V_4^* \setminus V^*_{4-} $.	
By Claim 2, for any $ u \in V^*_{4-}$,  $|N(u)\cap X^*|=1$    and $ |N(u)\cap V_+^*|=1$.
\vspace{0.5em}
	
\noindent {\bf Claim 3. }For any distinct $u,v \in V_{4-}^*$, $N(u) \cap V_+^*\not=N(v) \cap V_+^*$.
	
\noindent {\bf Proof of Claim 3. }	Suppose there are $u,v \in V_{4-}^*$ such that $N(u) \cap V_+^*=N(v) \cap V_+^*=\{w\}$.
 Assume that $N(u) \cap X^* =\{y_1\} $ and  $N(v) \cap X^* =\{y_k\} $, $k\in[3]$. If $k=1$, then $w u y_1 v w$ is a $C_4$ in $G$, a contradiction. If $ k \neq 1$, then $y_1, y_k \in N(w) \cap X^*$ by Claim 2, a contradiction with Claim 1. \q
	
	We can now calculate the number of edges in $G$. Clearly $I \subseteq V_+^*$. By Claim 3 we have $|V_{+}^* \setminus I|\, \geq |V^*_{4-}|$. Recall that $|V_4^*| \, \geq n-1$. If $|I| \, \geq 2n$, by Claims 1 and 2,  we have
\begin{equation*}
\begin{aligned}
e(G)  & \geq   e(G[X^*]) + e(X^*,  V_{+}^* \cup V^*_{4-}) + e(V_{+}^*, V^*_{4-}) + e(V^* \cup X^*, V^*_{4+}) \\
& = 3 + (|I| + |V_{+}^* \setminus I|+ | V^*_{4-}| )+ | V^*_{4-}|  + e(V^* \cup X^*, V^*_{4+}) \\
& \geq 3 +|I| + 3|V^*_{4-}| + 3 |V^*_{4+}| =  3 +|I| + 3|V_4^*| \\
& \geq 3+ 2n + 3(n-1) =5n,
\end{aligned}
\end{equation*}
a contradiction with $e(G) \leq f_4(n)=5n-1$. Thus $|I| \, \leq 2n-1$.

 Consider a weight function $g$ on $G$ satisfying\,:
\begin{equation*}
g(u)=\left\{
\begin{aligned}
 \; & |N(u) \cap X^*| + \frac{1}{2} |N(u) \cap V^*| \, , \; & & u \in V^*_+ \, ,\\
 \; &  \frac{1}{2} |N(u) \cap V^*| + \frac{1}{2} |N(u) \cap V_4^*| \, , \; & & u \in V^*_- \, ,\\
 \; & |N(u) \cap X^*| +  |N(u) \cap V_+^*| + \frac{1}{2} |N(u) \cap V_-^*| \, , \; &  & u \in V_4^* \, .
\end{aligned}
\right.
\end{equation*}	
\noindent Clearly $\sum \limits_{u \not \in X^*} g(u) = e(G)-3$.
\vspace{0.5em}


\noindent {\bf Claim 4. }(1) For any $u \in I$, $g(u)=1$. (2) For any $ u \in V^* \setminus I$, $g( u ) \geq 1$. (3) For any $u \in V_{4-}^*$, $g(u)=2$. (4) For any $u \in V_{4+}^*$, $g(u) \geq \frac{5}{2}$.

\noindent {\bf Proof of Claim 4. }By the definition we know that (1) and (3) hold.
For any $u \in V_+^* \setminus I$ we have $g(u) \geq |N(u) \cap X^*| =1 $.
For any $u \in V_-^* $ we have $g(u)= \frac{1}{2} |N(u) \cap V^*| + \frac{1}{2} |N(u) \cap V_4^*| = \frac{1}{2} d(u) \geq 1$.
Thus (2) holds.
For any $u \in V_{4+}^*$, by Claim 2 we know that $|N(u) \cap X^*| +  |N(u) \cap V_+^*| \, \geq 2$ and $d(u) \geq 3$.
Thus $g(u) \geq \frac{5}{2}$ and (4) holds. \q
\vspace{0.3em}

	By Claim 4, we have
\begin{equation*}
\begin{aligned}
e(G) - 3 & = \sum \limits_{u \not \in X^*} g(u)  = \sum_{u \in V^*} g(u) + \sum_{u \in V_{4-}^*} g(u)+ \sum_{u \in V_{4+}^*} g(u)\\
& \geq |V^*| + 2 |V_{4-}^*| + \frac{5}{2} |V_{4+}^*|  = (|V^*|+|V_{4-}^*| + |V_{4+}^*| ) + (|V_{4-}^*| + |V_{4+}^*| )  + \frac{1}{2} |V_{4+}^*| \\
& = |V(G) \setminus X^*| + |V_4^*| + \frac{1}{2} |V_{4+}^*|  \geq (4n-3) + (n-1) = 5n-4.
\end{aligned}
\end{equation*}Since $e(G)\le 5n-1$, we have $e(G)=5n-1$
and the equality holds iff $V_{4+}^* = \emptyset$, $|V_4^*|=n-1$ and for any $ u \in V^*$, $g(u)=1$.  Since $g(u)=1$ for any $ u \in V_+^* \setminus I$, we have $N(u) \cap V^* = \emptyset$ and hence $N(u) \subseteq X^* \cup V_4^*$. By Claim 3, we have $|V_+^* \setminus I| = |V_4^*|$ and $G[V_4^* \cup (V_+^* \setminus I)]$ is a perfect matching such that for any $u \in V_+^* \setminus I , v \in V_4^*$ with $uv \in E(G)$,  $d(u)=d(v)=2$ and $\emptyset \neq N(u) \cap N(v) \subseteq X^*$. Since $V_{4+}^* = \emptyset$, we have $e(V_-^*, X^* \cup V_+^* \cup V_4^*) = 0$ which means $V_-^* =\emptyset$; otherwise $G$ is not connected. By $|X^*|=3$ and $|V_+^* \setminus I| = |V_4^*|=n-1$,  $|I|=2n-1$. Hence $G$ is isomorphic to some graph in $\mathcal{F}_1^n$. \qed

\vspace{0.8em}
Then we consider the case $\ell=2$. We first give some definitions.

\begin{definition}\label{D35}
    For $i \in [4]$, we say $V_i$ is unstable if there exist two pendant vertices in $V_i$ that have different neighbors; otherwise we say $V_i$ is stable. We say $G$ is stable if all parts of $G$ are stable; otherwise we say $G$ is unstable. For a graph family $\mathcal{F}$, we say $\mathcal{F}$ is stable if for any $G \in \mathcal{F}$, $G$ is stable; otherwise we say $\mathcal{F}$ is unstable.
\end{definition}

    It is obvious that $G$ has at most one unstable part; otherwise there exists a $C_4$ by Lemma \ref{L21}. One can check that $\mathcal{F}_i^n$ are unstable when $n\ge 3$, where $i \in \{2,3\}$ and $\mathcal{F}_4^n$ is stable. We now define the operation \textit{lock} and its inverse \textit{unlock}.

\begin{definition}\label{D36}
    For an unstable graph $G$ with unstable part $V_i$, the operation that locks $V_i$ in $G$ is to create a new graph $G'$ obtained by\,:\\
    (1) selecting $|I_i|-1$ special pendant vertices in $I_i$ where the remaining one is $x^*$ with $N(x^*)=\{y^*\}$,  \\
    (2) deleting all $|I_i|-1$ edges $e_1,\ldots, e_{|I_i|-1}$ relating to these $|I_i|-1$ special vertices, \\
    (3) adding edges $e'_1, \ldots e'_{|I_i|-1}$ (one edge is related to exactly one special vertex and $y^*$).\\
    Clearly, $e(G')=e(G)$ and $G'$ is still $C_4$-saturated.
\end{definition}
\vspace{0.2em}

\begin{definition}\label{D37}
    For a stable graph $G$, the operation that unlocks $V_i$ (satisfying $I_i\not= \emptyset$) in $G$ is to create a new graph $G'$ obtained by\,:\\
    (1) selecting $|I_i|-1$ special pendant vertices in $I_i$, \\
    (2) deleting all $|I_i|-1$ edges $e_1,\ldots, e_{|I_i|-1}$ relating to these special vertices, \\
    (3) adding any edges $e'_1, \ldots e'_{|I_i|-1}$ (one edge is related to exactly one special vertex) such that $G'=G - \left( \bigcup \limits_{r=1}^{|I_i|-1 } e_r\right) + \left( \bigcup \limits_{r=1}^{|I_i|-1 } e'_r\right)$ is still $C_4$-saturated.\\
    Clearly, $e(G')=e(G)$.
\end{definition}

    Notice that there may exist several different graphs $G'$ by locking or unlocking a graph $G$. Let
\begin{equation*}
    \mathcal{L}(G,i):=\{ G':G' \text{ can be obtained by locking $V_i$ in } G\}\,,
\end{equation*}
\begin{equation*}
    \mathcal{UL}(G,i):=\{ G':G' \text{ can be obtained by unlocking $V_i$ in } G\}\,.
\end{equation*}

\noindent For the sake of convenience in narration, if $G$ is unstable then we write $\mathcal{UL}(G,i) =\{G\} $ for any $i \in [4]$; if $G$ is stable then we write $\mathcal{L}(G,i) =\{G\} $ for any $i \in [4]$. Then by our definitions, $G \in \mathcal{L}(G,i) \cap \mathcal{UL}(G,i)$ whether $G$ is stable or not. For a graph family $\mathcal{F}$, let
\begin{equation*}
    \mathcal{L}(\mathcal{F}):=\bigcup_{i \in [4], G \in \mathcal{F} } \mathcal{L}(G,i)\,,
\end{equation*}
\begin{equation*}
    \mathcal{UL}(\mathcal{F}):=\bigcup_{i \in [4], G \in \mathcal{F} } \mathcal{UL}(G,i)\,.
\end{equation*}
\vspace{0.2em}


\begin{lemma}\label{L38}
Let  $G \in Sat(K_4^n,C_4)$ such that the number of parts contained pendant vertices is $2$. Then for any $G' \in \mathcal{L}(\{G\})$, the number of parts contained pendant vertices is still $2$.
\end{lemma}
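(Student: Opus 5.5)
If $G$ is stable, then $\mathcal{L}(G,i)=\{G\}$ by convention, so there is nothing to prove. Assume therefore that $G$ is unstable with unstable part $V_i$, and let $G'$ be obtained by locking $V_i$. The plan is to show that the set of parts containing pendant vertices is the same in $G$ and in $G'$. Since $e(G')=e(G)=sat(K_4^n,C_4)$ and $G'$ is $C_4$-saturated, Lemma \ref{L31} applies to $G'$, and the number $\ell'$ of parts of $G'$ containing pendant vertices satisfies $2\le \ell'\le 3$. It therefore suffices to rule out $\ell'=3$, although the direct degree comparison below may give the result without this.

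The key step is to compare degrees in $G$ and $G'$. Locking only changes edges incident to the $|I_i|-1$ special pendant vertices in $I_i$: each such vertex loses its edge $e_r$ and gains the edge to $y^*$. Hence every special vertex still has degree $1$, and it stays in $V_i$. The vertex $y^*$ gains degree, so it cannot become pendant. A vertex $w$ that was the old neighbour of some special vertex loses degree. This is the only way a new pendant vertex can appear: some $w\ne y^*$ with $d_G(w)=2$ (or more generally $d_G(w)$ equal to one plus the number of special pendants attached to it) ends up with degree $1$ in $G'$. Conversely, a pendant vertex outside $V_i$ keeps its degree, since its unique edge is not an $e_r$. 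Its unique edge could be an $e_r$ only if it were the neighbour of a pendant vertex, which would make the two vertices an isolated $K_2$; this contradicts connectivity from Lemma \ref{L21}. So the parts containing pendants in $G$ still contain pendants in $G'$, and $\ell'\ge 2$.

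The main obstacle is to exclude a newly created pendant vertex $w$ lying in a third part. Suppose such a $w$ exists. Then in $G$ all but one of the neighbours of $w$ are pendant vertices of $V_i$. Let $V_j$ be the other part containing pendants in $G$. By Lemma \ref{L21}, the neighbour $z$ of any pendant vertex of $V_j$ is adjacent to both $w$ and $y^*$, and $w\ne z$. Thus the unique non-pendant neighbour of $w$ is $z$. If $w\in V_j$, no new part is created and we are done, so assume $w$ lies in neither $V_i$ nor $V_j$. I would now derive a contradiction in $G$ from the fact that $w$ has degree $1$ in $G'$. In $G'$ the vertex $w$ is pendant and its neighbour $z$ is adjacent to $y^*$, so $G'$ has pendant parts $V_i$, $V_j$ and the part of $w$. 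Proposition \ref{nl32} then forces $G'\in\mathcal{F}_1^n$, and in particular $e(G')=5n-1$ with $|I'|=2n-1$. Alternatively, and more robustly, I would count directly. Removing the pendants attached to $w$ and recounting with Proposition \ref{P22} applied to $G'$ (with $\ell=3$) gives the bound $e(G')\ge\frac{11n-(n_2+n_3)-3}{2}$. Combining this with the fact that the pendants at $w$ all came from $V_i$ shows that $V_i\cap I$ had to have size at least $2$ at both $w$ and $y^*$, which forces $n_2+n_3$ to be large. I would then check that this is incompatible with the structure of $\mathcal{F}_1^n$, where every pendant neighbour $y_t$ lies in the triangle $X^*$ and has degree at least $3$ plus its attached pendants. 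Undoing the lock would then give a vertex of $X^*$, namely the old $w$, adjacent to pendants from two different parts, contradicting Lemma \ref{L21}. Hence $\ell'=2$.
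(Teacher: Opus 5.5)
Everything up to the identification of $z$ is correct and follows the same line as the paper. A new pendant $w$ of $G'$ must be an old neighbour of special pendants of $V_i$, with exactly one further neighbour in $G$. Lemma~\ref{L21} in $G$, applied to a special pendant at $w$ and a pendant $p\in V_j$ with neighbour $z$, shows that this further neighbour is $z$.

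\textbf{The gap.} After that you never actually reach a contradiction.
\begin{itemize}
\item The detour through Proposition~\ref{nl32} and a recount with Proposition~\ref{P22} is only sketched. Its key claims are not justified: that $V_i\cap I$ has ``size at least $2$ at both $w$ and $y^*$'', and that $n_2+n_3$ is forced to be large.
\item The closing sentence is wrong as stated. In $G$ the vertex $w$ is adjacent only to pendants of $V_i$ and to the non-pendant $z$. In $G'$ it is itself a pendant. So it is neither a member of $X^*$ nor adjacent to pendants from two different parts.
\end{itemize}
As written, the proposal therefore does not rule out $\ell'=3$.

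\textbf{The missing step.} It is one line and uses only what you already established. Locking does not touch the pendants of $V_j$, so in $G'$ both $w$ and $p$ have the unique neighbour $z$, while $w\notin V_j$. Since $G'$ is $C_4$-saturated relative to $K_4^n$, Lemma~\ref{L21} applies to $G'$: pendant vertices in distinct parts have distinct neighbours. This is a contradiction.

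This is the paper's argument in another guise. The paper gets $y_1\neq y_3$ from Lemma~\ref{L21} in $G'$. It then observes that the special pendant $x_2^*$ hanging at the new pendant $x_3$ is at distance at least $4$ from $x_1$ in $G$, since any path must run $x_2^*x_3y_3\cdots y_1x_1$. This contradicts $diam_p(G)\le 3$.

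With that line in place, the appeal to Proposition~\ref{nl32} and the counting can be dropped entirely.
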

\noindent {\bf Proof. }Assume $G$ has pendant vertices in $V_1 \cup V_2$ and is unstable for $V_2$. Suppose $G'$ has pendant vertices in $V_1 \cup V_2 \cup V_3$. Pick $x_i \in V_i$ such that $N_{G'}(x_i)=\{y_i\}$ for $ i \in [3]$. By the definition there is $x_2^* \in V_2\setminus \{x_2\}$ such that $N_G(x_2^*)=\{x_3\}$ in $G$.
But then the shortest path connecting $x_1$ and $x_2^*$ in $G$ is a $P_5=x_2^* x_3 y_3 y_1 x_1$, a contradiction. \qed
\vspace{0.3em}

\begin{cor}\label{C39}
    Let $\mathcal{A}\subseteq Sat(K_4^n,C_4)$ such that the number of parts contained pendant vertices is $2$. Assume $\mathcal{L}(\mathcal{A})=\mathcal{B}$. Then $\mathcal{B} \subseteq \mathcal{UL}(\mathcal{B})= \mathcal{A}$.
\end{cor}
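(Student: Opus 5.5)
The plan is to prove the chain $\mathcal{B}\subseteq\mathcal{UL}(\mathcal{B})$, $\mathcal{A}\subseteq\mathcal{UL}(\mathcal{B})$ and $\mathcal{UL}(\mathcal{B})\subseteq\mathcal{A}$, reading $\mathcal{A}$ as the full class of graphs in $Sat(K_4^n,C_4)$ with exactly two parts containing pendant vertices. This is the reading in which the statement is used later. For an arbitrary subfamily the inclusion $\mathcal{UL}(\mathcal{B})\subseteq\mathcal{A}$ can fail, since unlocking may land on a graph of the class that was not in $\mathcal{A}$.

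\emph{First inclusion.} Let $G'\in\mathcal{B}$, so $G'\in\mathcal{L}(G,i)$ for some $G\in\mathcal{A}$ and some $i\in[4]$. By the convention fixed after Definition \ref{D37}, $G'\in\mathcal{UL}(G',j)$ for every $j$, whether $G'$ is stable or not. Hence $\mathcal{B}\subseteq\mathcal{UL}(\mathcal{B})$.

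\emph{Second inclusion: $\mathcal{A}\subseteq\mathcal{UL}(\mathcal{B})$.} Take $G\in\mathcal{A}$.
\begin{itemize}
\item If $G$ is stable, then $\mathcal{L}(G,i)=\{G\}$, so $G\in\mathcal{B}$, and $G\in\mathcal{UL}(G,i)$ by the convention.
\item If $G$ is unstable with unstable part $V_i$, lock $V_i$ to obtain $G'\in\mathcal{L}(G,i)\subseteq\mathcal{B}$. Then reverse the operation: select the same $|I_i|-1$ special pendant vertices, delete their edges to $y^*$, and restore their original edges. This is an admissible unlocking in the sense of Definition \ref{D37}, because the result is $G$ itself, which is $C_4$-saturated. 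Thus $G\in\mathcal{UL}(G',i)\subseteq\mathcal{UL}(\mathcal{B})$.
\end{itemize}

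\emph{Third inclusion: $\mathcal{UL}(\mathcal{B})\subseteq\mathcal{A}$.} This is the main step. Let $G''\in\mathcal{UL}(G',i)$ with $G'\in\mathcal{B}$. By Lemma \ref{L38}, $G'$ has pendant vertices in exactly two parts. By Definition \ref{D37}, $G''$ is $C_4$-saturated and $e(G'')=e(G')=sat(K_4^n,C_4)$, so $G''\in Sat(K_4^n,C_4)$. It remains to show that $G''$ still has exactly two parts containing pendant vertices. The special vertices remain pendant and stay in $V_i$. The only vertex losing degree is $y^*$, and the vertices gaining degree never drop to degree $1$. Two cases arise.
\begin{itemize}
\item If $y^*$ became pendant in $G''$, its only neighbour would be the pendant vertex $x^*$. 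Then $\{x^*,y^*\}$ would be a component, contradicting the connectivity in Lemma \ref{L21}, since $n\ge2$.
\item Otherwise the set of parts containing pendant vertices is unchanged, so $\ell(G'')=\ell(G')=2$.
\end{itemize}
Lemma \ref{L31} then guarantees that $\ell(G'')$ lies in $\{2,3\}$, and the argument above pins it to $2$. Hence $G''\in\mathcal{A}$.

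The delicate point is the degree bookkeeping in this last step. The added edges $e'_r$ are arbitrary, so I would check that no non-special vertex has its degree reduced other than $y^*$. This holds because the operation only removes edges incident to special vertices.
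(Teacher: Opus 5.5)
Your proof is correct and follows the same route as the paper's: $\mathcal{B}\subseteq\mathcal{UL}(\mathcal{B})$ holds by the convention, $\mathcal{UL}(\mathcal{B})\subseteq\mathcal{A}$ follows from Lemma \ref{L38} plus the fact that unlocking cannot create pendant vertices in a new part, and $\mathcal{A}\subseteq\mathcal{UL}(\mathcal{B})$ follows by undoing the lock; your version makes the degree bookkeeping explicit, and your remark that $\mathcal{A}$ must be read as the whole class is right. One detail that both you and the paper leave implicit is that the locked graph $G'$ must be stable, since otherwise Definition \ref{D37} does not apply and $\mathcal{UL}(G',i)=\{G'\}$; this holds because, by Lemma \ref{L38} and Lemma \ref{L21}, any vertex made pendant by the lock lies in the other pendant part and is adjacent to that part's common neighbour.
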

\noindent {\bf Proof. }It is clear that when we unlock a stable graph $G$, the number of parts contained pendant vertices is non-increasing. By Lemma \ref{L38} we know that $\mathcal{B} \subseteq \mathcal{UL}(\mathcal{B}) \subseteq \mathcal{A}$. For any $G \in \mathcal{A}$ there exists $G' \in \mathcal{B}$ and $i \in [4]$ such that $G' \in \mathcal{L}(G,i)$. Thus $G \in \mathcal{UL}(G',i) \subseteq \mathcal{UL}(\mathcal{B})$.\qed
\vspace{0.3em}
	
	Let $ H_1 \in \mathcal{F}_2^n \cap \mathcal{F}_4^n$ (see Figure 1(a)) such that all vertices in $V_2$ are adjacent to exactly one vertex $x^* \in V_1$.
    Let $H_2 \in \mathcal{F}_2^n \cap \mathcal{F}_3^n$ (see Figure 1(b)) such that all vertices in $V_2$ are adjacent to $z^* \in V_3$.
    Let $H_3 \in \mathcal{F}_3^n$ such that all vertices in $V_1$  are adjacent to $w^*$ (see Figure 1(c)). Let $\mathcal{H}=\{H_1,H_2,H_3\}$.
    It is easy to check that $\mathcal{L}(\mathcal{F}_2^n \cup \mathcal{F}_3^n)= \mathcal{H}$ and $\mathcal{UL}(\mathcal{H})= \mathcal{F}_2^n \cup \mathcal{F}_3^n$.
    Still it is easy to check that $ \mathcal{UL}(\mathcal{F}_4^n)= \mathcal{F}_2^n \cup \mathcal{F}_4^n$ in the sense of isomorphism as $\mathcal{F}_4^n$ are stable.
    We have $\mathcal{L}(\mathcal{F}_2^n \cup \mathcal{F}_3^n \cup \mathcal{F}_4^n )= \mathcal{H} \cup \mathcal{F}_4^n $ and $\mathcal{UL}(\mathcal{H}\cup \mathcal{F}_4^n )= \mathcal{F}_2^n \cup \mathcal{F}_3^n \cup \mathcal{F}_4^n $.
    We can now return to our proof on the structure of a $C_4$-saturated quadripartite graph with $\ell=2$. Note that $H_1\in \mathcal{F}_4^n$.
\vspace{0.3em}

\begin{lemma}\label{nl38}
    When $\ell=2$ and $G$ is a stable graph in $Sat(K_4^n,C_4)$, we have $e(G) = 5n-1$ and $G$ is isomorphic to some graph $\mathcal{H} \cup \mathcal{F}_4^n$.
\end{lemma}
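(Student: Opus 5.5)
We work with $\ell=2$, pendant vertices only in $V_1\cup V_2$, and $G$ stable. Then $I\cap V_1$ has a single common neighbour $y^*$ and $I\cap V_2$ a single common neighbour $x^*$. By Lemma \ref{L21}, $x^*y^*\in E(G)$. Put $n_1=|I\cap V_1|\ge n_2=|I\cap V_2|$.

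Proposition \ref{P22} with $\ell=2$ gives $e(G)\ge \frac{11n-n_2-3}{2}$. Together with $e(G)\le 5n-1$ this forces $n_2\ge n-1$. The plan is to rerun the weight-function argument of Proposition \ref{P22} with $x\in I\cap V_1$ and $y=y^*$, and to track where equality can hold.

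\textbf{Locating $x^*$ and $y^*$.} Since all of $I\cap V_1$ sits at distance $2$ from $x$ through $y^*$, these vertices lie in $A_2$. The vertex $y^*$ lies in some part $V_j$ with $j\ne 1$. First decide whether $y^*\in V_2$ and $x^*\in V_1$ (the $\mathcal{F}_4^n$ situation), or whether one of them lies in $V_3\cup V_4$ (the $\mathcal{H}$ situation).

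\textbf{Forbidding a second neighbour in $S_2$.} As in the proof of Theorem \ref{T111}, any vertex other than $y^*$ with two neighbours in $S_2$ closes a $C_4$ through $y^*$. So every $u\ne y^*$ has $|N(u)\cap S_2|\le 1$.

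\textbf{Bounding the weights.} Since $n_2\ge n-1$, the bound $|A_3|\le n_2$ is nearly tight and $|A_2|+|S_4|\le n-1$. The slack in the inequality $e(G)-1\ge \frac32(4n-2)-\frac12(|A_2|+|A_3|+|S_4|)$ is therefore at most $\frac12$ or so. Hence all but at most one vertex of $B_2\cup B_3$ has weight exactly $\frac32$, and all of $A_2\cup A_3\cup S_4$ has weight exactly $1$. This gives strong local structure:
\begin{itemize}
\item $G[B_2]$ and $G[B_3]$ are essentially perfect matchings;
\item vertices of $S_4$ have degree $2$ into $S_3$.
\end{itemize}
Split by parity and by whether $n_2=n$ or $n_2=n-1$, exactly as in Case 1 and Case 2 of the proof of Theorem \ref{T111}. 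In each case, use saturation (a $P_4$ must join each pendant vertex to every vertex of another part) to pin down $N(v)$ for $v\in V_3\cup V_4$.

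\textbf{Reading off the families.} When $x^*\in V_1$ and $y^*\in V_2$, the $P_4$ condition from $V_3\cup V_4$ to both pendant classes forces each such $v$ to have degree $2$: one neighbour in $\{x^*,y^*\}$ and one matching partner in the opposite part. This is $\mathcal{F}_4^n$, and it includes $H_1$. When $y^*$ (say) lies in $V_3\cup V_4$, the pendant vertices of $V_2$ hang on $x^*$, which forces a triangle $y^*z^*w^*$ as in $H_2$ or $H_3$. Here $V_3\cup V_4\subseteq N(y^*)$ must hold, since otherwise some vertex of $V_3\cup V_4$ is at distance $\ge 4$ from a pendant vertex. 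The matching on $V_3\cup V_4$ then comes from the tight weight count. The edge count $5n-1$ drops out of the equality case.

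The main obstacle is the case analysis when exactly one vertex has weight $2$, or when $n_2=n-1$, leaving a single non-pendant vertex $w\in V_2$. One has to show this vertex is forced to be $x^*$, $z^*$, or the like, and that no exotic configuration survives. I would handle it as in Case 2(ii) of Theorem \ref{T111}: enumerate where the extra weight sits, and for each option exhibit either a $C_4$ or a pair of vertices in distinct parts with no connecting $P_4$. For small $n$ (e.g.\ $n=2$), where the inequality $2rn-3\ge0$-type estimates are tight, a direct check may be needed.
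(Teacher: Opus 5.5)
Your counting step is correct, and it is a genuinely different entry point from the paper's. Proposition \ref{P22} together with $e(G)\le 5n-1$ does force $n_2\ge n-1$. With $n_1\ge n_2$, the total excess of the weights (over $\frac32$ on $B_2\cup B_3$, over $1$ on $A_2\cup A_3\cup S_4$) is at most $\frac12$, and it is $0$ once $n_2=n-1$; this already gives $e(G)=5n-1$.

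Two things are missing before the structure can be read off. First, you never exclude $n_1=n_2=n$. The paper does this at the outset: if $I=V_1\cup V_2$, then $x^*,y^*\in V_3\cup V_4$ have no common neighbour, so no $P_4$ joins a pendant vertex of $V_1$ to $x^*$. Only after this is your count exactly tight. Second, your ``(say)'' is not a free symmetry under the ordering $n_1\ge n_2$. With that ordering, $y^*\in V_3\cup V_4$ yields only $H_3$, while $H_2$ arises from $y^*\in V_2$, $x^*\in V_3\cup V_4$, a case you do not treat. If you drop the ordering, your claim that $A_2\cup A_3\cup S_4$ carries no excess fails: $H_2$ with $V_1,V_2$ swapped has its excess $\frac12$ on a vertex of $A_2$. (The parity split borrowed from Theorem \ref{T111} plays no role for $k=4$.)

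The real gap is that the structural identification, which is the content of the lemma, is asserted rather than proved. What you do assert for the $\mathcal{H}$ case is wrong as stated. Your $y^*$ is the common neighbour of $I\cap V_1$, so if it lies in $V_4$ it cannot be adjacent to all of $V_3\cup V_4$. In $H_3$ the vertex dominating $(V_3\cup V_4)\setminus\{x^*,y^*\}$ is the non-pendant vertex of $V_2$, which closes the triangle on $x^*y^*$.

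Nor does a distance argument suffice. For $n=2$ take $V_1=\{x,x^*\}$, $V_2=\{y^*,v\}$, $V_3=\{b,u'\}$, $V_4=\{b',u\}$, with edges $xy^*$, $vx^*$, $x^*y^*$, $y^*b$, $y^*b'$, $bb'$, $bu$, $uu'$, $u'x^*$. This graph is $C_4$-free with $9=5n-1$ edges, any two vertices in distinct parts are at distance at most $3$, and every weight in your count is tight. Yet $v$ and $u'$ are joined by no path of length $3$, so the graph is not saturated and is not in $\mathcal{F}^2$.

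What closes the argument is the exact $P_4$ condition: each $B_3$-vertex, and also its $S_3$-partner, needs a common neighbour with $x^*$.
\begin{itemize}
\item If $x^*\in V_1$, this forces $y^*\in V_2$ (a vertex of $A_2$ has no $S_2$-neighbour). It also forces every $B_3$-vertex and its partner to be adjacent to $x^*$, giving $\mathcal{F}_4^n$.
\item If $x^*\in V_3$, it forces every $B_3$-vertex to have the $B_2$-partner $z'$ of $x^*$ as its unique $S_2$-neighbour. It also forces $S_4=\emptyset$, since two $B_3$-neighbours of a vertex of $S_4$ would close a $C_4$ through $z'$. One then gets $H_2$ or $H_3$ according as $y^*\in V_2$ or $y^*\in V_4$.
\end{itemize}
The paper encodes these same constraints in Claims 2--3 and the weight $h$ on $U_0^*,U_1^*,U_2^*$. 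Your plan needs an equivalent argument and does not yet supply one.
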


\noindent{\bf Proof. }Assume that $I \subseteq V_1 \cup V_2$. Let $I \cap V_i = I_i $ and $|I_i|=n_i $. Pick $x_1 \in I_1$ and $x_2 \in I_2$ such that $x_1y_1,x_2y_2 \in E(G)$, then $y_1y_2\in E(G)$ by Lemma \ref{L21}. By symmetry we only need to consider that $X^*:=\{y_1,y_2\} \subseteq V_{j} \cup V_{j+1}$ for $j \in [3]$. Since $y_1y_2 \in E(G)$ and $G$ is stable, $G[X^* \cup I]$ is a double star with edge number $|X^*|+|I|-1$.
If $|I| \geq 2n$, then $|I|=2n$. We may assume $y_1 \in V_3$ and $y_2 \in V_4$, then there exists a $P_4=x_1y_1zy_2$ connecting $x_1$ and $y_2$ for some $z  \in N(y_1) \cap N(y_2)$ which means $z \in V_1 \cup V_2$, a contradiction.
Therefore, $n_1+n_2 \leq 2n-1$. Let $U^*= V(G) \setminus (I \cup X^*)$ and $U_i^* = \{ u \in U^*: |N(u) \cap X^*|=i\}$, where $0\le i\le 2$. Then $d(u)\ge 2$ for any $u\in U^*$. Let $V_{ab}=V_a \cup V_b$ for distinct $a,b \in [4]$. Since $G$ is $C_4$-free, we have the following claims.
\vspace{0.5em}

\noindent {\bf Claim 1. }$|U_2^*|\, \leq 1$. If $u \in U_2^*$, then $N(u) \cap U_1^* = \emptyset$.


\vspace{0.5em}
	
\noindent {\bf Claim 2. }For $u,v \in U_1^*$, if $uv \in E(G)$ then $N(u) \cap X^*=N(v) \cap X^*$. Moreover, $|N(u)\cap U_1^*|\le 1$ for any $u \in U_1^*$.

\noindent {\bf Proof of Claim 2. }  Suppose there are $u,v \in U_1^*$ with $uv \in E(G)$ such that $N(u) \cap X^* \neq N(v) \cap X^*$. Then $G[\{u,v, y_1,y_2\}]$ forms a $C_4$, a contradiction. Suppose there is $u \in U_1^*$ such that $|N(u)\cap U_1^*|\ge 2$, say $v_1,v_2 \in N(u) \cap U_1^*$. Then $N(u) \cap X^*=N(v_i) \cap X^*$ for $i=1,2$. Assume $N(u) \cap X^*=\{y_1\}$. Then $uv_1y_1v_2u$ is a $C_4$ in $G$, a contradiction. \q
\vspace{0.5em}

\noindent {\bf Claim 3. }For $u \in U_1^* \cap V_{34}$, $|N(u) \cap U_1^*|\,=1$. For $u \in U_0^* \cap V_{34}$, either $|N(u) \cap U_2^*|\,=1$ or $|N(u) \cap U_1^*|\,\geq 2$.

 \noindent {\bf Proof of Claim 3. }   Let $u \in U_1^* \cap V_{34}$. Assume $N(u) \cap X^*=\{y_1\}$. By Claim 2, we have $|N(u) \cap U_1^*|\,\leq 1$. If $|N(u) \cap U_1^*|\,=0$, then there is no $P_4$ connecting $x_1$ and $u$ by Claim 1, a contradiction with $G$ being $C_4$-saturated.

   Let $u \in U_0^* \cap V_{34}$. Since $G$ is $C_4$-saturated, there exists a $P_4$ connecting $u$ and $x_i$ for $i \in [2]$. Then we have either $|N(u) \cap U_2^*|\,=1$ or $|N(u) \cap U_1^*|\,\geq 2$. \q
\vspace{0.5em}

Define a weight function $h$ on $G$\,:
\begin{equation*}
h(u)=\left\{
\begin{aligned}
 \; & |N(u) \cap U_2^*| +|N(u) \cap U_1^*| +  \frac{1}{2} |N(u) \cap U_0^*| \, , \; & & u \in U^*_0 \, ,\\
 \; & |N(u) \cap X^*| + \frac{1}{2} |N(u) \cap U_1^*|\, , \; & & u \in U^*_1 \, ,\\
 \; & |N(u) \cap X^*|\,  =2 \, , \; &  & u \in U_2^* \, .
\end{aligned}
\right.
\end{equation*}	
\noindent Clearly $\sum \limits_{u \not \in X^* \cup I} h(u) = e(G)-e(G[X^* \cup I])=e(G)-|X^*|-|I|+1$.
\vspace{0.7em}

By Claim 3 and $d(u)\ge 2$ for any $u\in U^*$, we easily have the following claims.

\noindent {\bf Claim 4. }For $u \in U_1^* \cap V_{34}$, $h(u)=\frac{3}{2}$. For $u \in U_0^* \cap V_{34}$, $h(u) \geq \frac{3}{2}$ and the equality holds iff $|N(u) \cap U_2^*|\,=1$, $|N(u) \cap U_1^*|\,=0$, $|N(u) \cap U_0^*|\,=1$.

\vspace{0.5em}

\noindent {\bf Claim 5. }For $u \in U_1^* \cap V_{12}$, $h(u) \in \{1,\frac{3}{2}\}$ and $N(u) \cap U_0^* \neq \emptyset$ if $h(u)=1$. For $u \in U_0^* \cap V_{12}$, $h(u) \geq \frac{3}{2}$ where the equality hold iff $|N(u) \cap U_2^*|+|N(u) \cap U_1^*|\,=1$, $|N(u) \cap U_0^*|\,=1$.

\noindent {\bf Proof of Claim 5. }	The first result is obvious by Claims 1, 2 and $d(u) \geq 2$. Let $u \in V_i \cap U_0^*$ with $i \in [2]$. Since there exists a $P_4$ connecting $x_{3-i}$ and $u$, we have $|N(u) \cap U_2^*|+|N(u) \cap U_1^*|\,\geq 1$. By $d(u) \geq 2$, the  result holds. \q
\vspace{0.5em}

	 By Claims 4 and 5, we have
\begin{equation}
\begin{aligned}
e(G) & = \sum \limits_{u \not \in X^* \cup I} h(u) + |X^*| + |I| -1\\
& = \sum_{u \in U^* \cap V_{12}}  h(u) +  \sum_{u \in U^* \cap V_{34}}  h(u) +  |X^*| + |I| -1\\
& \geq |U^* \cap V_{12}| + \frac{3}{2} |U^* \cap V_{34}| + |X^*| + |I| -1  = : \mathscr{U}\,.
\end{aligned}
\end{equation}
Recall that $e(G) = sat(K_4^n,C_4) \leq 5n-1$. We now consider three cases relying on where $X^*$ located at.
\vspace{0.5em}

\noindent {\bf Case 1. }$X^*:=\{y_1,y_2\} \subseteq V_{1} \cup V_{2}$.
	
In this case, $y_1 \in V_2$, $y_2 \in V_1$, $U^*_2\cap V_{12}=\emptyset$ and we have
\begin{equation*}
\begin{aligned}
e(G)\ge \mathscr{U} = 4n-1 + \frac{1}{2} |U^* \cap V_{34}| =5n-1.
\end{aligned}
\end{equation*}	
Since $e(G) \leq 5n-1$, we have $e(G)=5n-1$ and
 the equality holds iff $h(u)= 1$ for any $u \in U^* \cap V_{12}$ and $h(u)= \frac{3}{2}$ for any $u \in U^* \cap V_{34}$ in equation (1). Since $h(u)= 1$ for any $u \in U^* \cap V_{12}$, we have $U^*_0\cap V_{12}=\emptyset$ by Claim 5. Suppose there is $u\in U^*_1\cap V_{12}$ such that $h(u)=1$. By Claim 5, there is $v\in U^*_0$ such that $uv\in E(G)$. Since $U^*_0\cap V_{12}=\emptyset$, $v\in V_{34}$. Thus $h(v)= \frac{3}{2}$ by $v\in U^*_0 \cap V_{34}\subseteq U^* \cap V_{34}$. Since $u\in N(v)\cap U^*_1$, we have a contradiction with Claim 4. Hence
  $U^* \cap V_{12} =\emptyset$ which implies $X^* \cup I = V_1 \cup V_2$. Since $h(u)= \frac{3}{2}$ for any $u \in U^* \cap V_{34}$, $U^*_2\cap V_{34}=\emptyset$.
    By Claims 2 and 4,  $G[V_{34}] $ is a perfect matching and any edge $uv \in E(G[V_{34}])$ satisfies $d(u)=d(v)=2$ and $N(u) \cap X^* =N(v) \cap X^*$. Thus $G$ is isomorphic to some graph $H \in \mathcal{F}_4^n$.
\vspace{0.5em}

\noindent {\bf Case 2. }$X^*:=\{y_1,y_2\} \subseteq V_{2} \cup V_{3}$. 
	
In this case, we have $y_1 \in V_2$, $y_2 \in V_3$. Then
\begin{equation*}
\begin{aligned}
e(G)\ge \mathscr{U} = 4n-1 + \frac{1}{2} |U^* \cap V_{34}| =5n-\frac{3}{2}\,.
\end{aligned}
\end{equation*}	
Since $e(G)$ is an integer, we have $e(G) \geq 5n-1$ and thus $e(G)=5n-1$. So there exists a special vertex $u^* $ such that

	(1) $h(u)= 1$ for any $u \in (U^* \cap V_{12}) \setminus \{u^*\}$ and $h(u)= \frac{3}{2}$ for any $u \in (U^* \cap V_{34}) \setminus \{u^*\}$\,;
	
	(2) $h(u^*)=\frac{3}{2}$ if $u^* \in U^* \cap V_{12}$ or $h(u^*)=2$ if $u^* \in U^* \cap V_{4}$\,.
	
\noindent As there is a $P_4$ connecting $y_2$ and $x_1$, we have $U_2^* \neq \emptyset$.
So we have $u^* \in U_2^* \cap V_{34}$ and $h(u^*)=2$. Then $U^* \cap V_{12} =\emptyset$ by Claims 1, 4, 5 (the proof is similar as in Case 1). Hence $X^* \cup I  = V_1 \cup V_2 \cup \{y_2\}$.
We assert that $U_0^* = \emptyset$, otherwise there is an edge $uv \in V_{34}$ such that $uu^*,vu^* \in E(G)$ by Claim 4 which is a contradiction with $u^* \in  V_{4}$.
By Claims 2 and 4, $G[V_{34} \setminus \{y_2,u^*\}] $ is a perfect matching  and any edge $uv  \in G[V_{34} \setminus \{y_2,u^*\}]$ satisfies $d(u)=d(v)=2$ and $N(u) \cap X^* =N(v) \cap X^* =\{y_1\}$ since $y_2 \in V_3$.
Thus $G$ is isomorphic to $H_2$.
\vspace{0.5em}

\noindent {\bf Case 3. }$X^*:=\{y_1,y_2\} \subseteq V_{3} \cup V_{4}$.

	By symmetry we can assume $y_1 \in V_4 $, $y_2 \in V_3$. As there exists a $P_4$ connecting $y_i$ and $x_{3-i}$, we have $U_2^* \neq \emptyset$ and hence $\{u^*\}=U_2^* \subseteq V_{12}$. We now recalculate equation (1)\,:
\begin{equation}
\begin{aligned}
e(G) & = \sum \limits_{u \not \in X^* \cup I} h(u) + |X^*| + |I| -1\\
& = \sum_{u \in (U^* \cap V_{12}) \setminus\{u^*\}}  h(u) +h(u^*)+  \sum_{u \in U^* \cap V_{34}}  h(u) +  |X^*| + |I| -1\\
& \geq |(U^* \cap V_{12}) \setminus\{u^*\}| + \frac{3}{2} |U^* \cap V_{34}| + |X^*| + |I| +1 \\
& \geq 4n+ \frac{1}{2} |U^* \cap V_{34}| =5n-1\,.
\end{aligned}
\end{equation}
Thus we have $e(G)= 5n-1$ and $h(u)= 1$ for any $u \in (U^* \cap V_{12}) \setminus \{u^*\}$ and $h(u)= \frac{3}{2}$ for any $u \in U^* \cap V_{34}$. Then $(U^* \cap V_{12}) \setminus\{u^*\} =\emptyset$ by Claims 1, 4, 5 (the proof is similar as in Case 1).
Hence $V_1 \cup V_2 = I \cup \{u^*\}$. We assert that $U_1^* = \emptyset$, otherwise there is an edge $uv \in V_{34}$ such that $uy_i,vy_i \in E(G)$ for some $i \in [2]$ by Claims 2 and 4 which is a contradiction to $y_i \in  V_{34}$. Therefore $V_{34} \subseteq U_0^*$. By Claims 4, $G[V_{34} \setminus X^*] $ is a perfect matching and any edge $uv  \in G[V_{34} \setminus \{y_2,u^*\}]$ satisfies $d(u)=d(v)=2$ and $N(u) \cap N(v) =\{u^*\}$. Thus $G$ is  isomorphic to $H_3$.
\vspace{0.5em}

	By the discussion above, we know that $e(G)=5n-1$ and $G$ is isomorphic to either $H_2$ or $H_3$ or some $H \in \mathcal{F}_4^n$ and we are done. \qed
	
%

\vspace{0.3em}

\begin{cor}\label{C311}
    Let $\mathcal{A}\subseteq Sat(K_4^n,C_4)$ such that the number of parts contained pendant vertices is $2$. Then $\mathcal{A}=\mathcal{F}_2^n \cup \mathcal{F}_3^n \cup \mathcal{F}_4^n $.
\end{cor}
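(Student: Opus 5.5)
The plan is to combine Lemma \ref{nl38} with Corollary \ref{C39} and the locking/unlocking identities stated just before Lemma \ref{nl38}. Let $\mathcal{A}$ be as in the statement, i.e.\ the family of all graphs in $Sat(K_4^n,C_4)$ with exactly two parts containing pendant vertices, and put $\mathcal{B}=\mathcal{L}(\mathcal{A})$. By Corollary \ref{C39}, $\mathcal{B}\subseteq \mathcal{UL}(\mathcal{B})=\mathcal{A}$. So it suffices to identify $\mathcal{B}$ and then apply the operator $\mathcal{UL}$.

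\textbf{Step 1: $\mathcal{B}\subseteq \mathcal{H}\cup\mathcal{F}_4^n$.} Let $G'\in\mathcal{B}$, so $G'\in\mathcal{L}(G,i)$ for some $G\in\mathcal{A}$ and $i\in[4]$.
\begin{itemize}
\item Locking preserves the edge number and $C_4$-saturation, so $G'\in Sat(K_4^n,C_4)$.
\item By Lemma \ref{L38}, $G'$ still has pendant vertices in exactly two parts, so $\ell=2$ for $G'$.
\item $G'$ is stable. $G$ has at most one unstable part (Lemma \ref{L21}), and locking it makes every pendant vertex of that part adjacent to $y^*$. If $G$ was already stable, then $\mathcal{L}(G,i)=\{G\}$ by convention.
\end{itemize}
Lemma \ref{nl38} then gives $G'\in\mathcal{H}\cup\mathcal{F}_4^n$ up to isomorphism.

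\textbf{Step 2: $\mathcal{H}\cup\mathcal{F}_4^n\subseteq\mathcal{B}$.} The graphs $H_1,H_2,H_3$ and all members of $\mathcal{F}_4^n$ have $5n-1$ edges and are $C_4$-saturated, as noted after Theorem \ref{T112}. Each has pendant vertices in exactly two parts: $V_1\cup V_2$ in $\mathcal{F}_4^n$, and $V_1$ together with $V_2\setminus\{y^*\}$ in $H_2$ and $H_3$. Since $sat(K_4^n,C_4)\le 5n-1$, and the lower bound is already contained in the argument, all of them lie in $\mathcal{A}$. They are stable, so each is fixed by $\mathcal{L}$ and therefore lies in $\mathcal{B}$. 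Hence $\mathcal{B}=\mathcal{H}\cup\mathcal{F}_4^n$.

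\textbf{Step 3: apply $\mathcal{UL}$.} Corollary \ref{C39} gives $\mathcal{A}=\mathcal{UL}(\mathcal{B})=\mathcal{UL}(\mathcal{H}\cup\mathcal{F}_4^n)$. The identity recorded before Lemma \ref{nl38} states that this equals $\mathcal{F}_2^n\cup\mathcal{F}_3^n\cup\mathcal{F}_4^n$, which proves the corollary.

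The main obstacle is justifying that identity rather than just quoting it. For this I will verify $\mathcal{UL}(\mathcal{H})=\mathcal{F}_2^n\cup\mathcal{F}_3^n$ and $\mathcal{UL}(\mathcal{F}_4^n)=\mathcal{F}_2^n\cup\mathcal{F}_4^n$ by checking, in each stable graph, exactly where the moved pendant vertices can be re-attached while keeping the graph $C_4$-free and saturated.

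\textbf{Example $H_1$.} A vertex $u\in V_2\setminus\{y^*\}$ must still reach each $V_1$-pendant through a $P_4$. In $H_1$ this forces $N(u)\subseteq V_1\cup V_3\cup V_4$, with the neighbour lying in $N(y^*)$. This is exactly condition (ii) of $\mathcal{F}_2^n$.

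\textbf{Example $H_3$.} The $V_1$-pendants may move only between $y^*$ and $w^*$. This gives condition (iii) of $\mathcal{F}_3^n$.

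Every other placement should create a $C_4$ through $y^*$, or leave a non-adjacent pair in different parts at distance $\ge 4$. Such a pair would contradict $diam_p(G)\le 3$ from Lemma \ref{L21}. This case check is short but is where care is needed.
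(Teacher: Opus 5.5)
Your proposal is correct and follows the paper's proof: set $\mathcal{B}=\mathcal{L}(\mathcal{A})$, use Lemmas \ref{L38} and \ref{nl38} together with the fact that the graphs in $\mathcal{H}\cup\mathcal{F}_4^n$ are $C_4$-saturated to get $\mathcal{B}=\mathcal{H}\cup\mathcal{F}_4^n$, and conclude with Corollary \ref{C39} and the identity $\mathcal{UL}(\mathcal{H}\cup\mathcal{F}_4^n)=\mathcal{F}_2^n\cup\mathcal{F}_3^n\cup\mathcal{F}_4^n$, which the paper asserts as easy to check without verifying it. If you do carry out that verification, the test for a re-attachment is that every non-adjacent pair in different parts is still joined by a path with exactly three edges, not merely that it lies at distance at most $3$: for example, hanging a $V_1$-pendant of $H_2$ on $z^*$ creates no $C_4$ and no pair at distance $\ge 4$, yet it fails saturation, because that pendant and the $V_2$-pendants at $z^*$ have no $P_4$ between them (cf.\ Lemma \ref{L21}).
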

\noindent{\bf Proof. }Let $\mathcal{L}(\mathcal{A})=\mathcal{B}$. By Lemma \ref{nl38}, any graph $G \in \mathcal{B}$ must be isomorphic to some graph in $\mathcal{F}_4^n \cup \mathcal{H}$. As we have checked that any  $H \in \mathcal{F}_4^n \cup \mathcal{H}$ is $C_4$-saturated in Section 2, $\mathcal{B} = \mathcal{F}_4^n \cup \mathcal{H}$. By Corollary \ref{C39}, $\mathcal{A} = \mathcal{UL}(\mathcal{B})= \mathcal{UL}(\mathcal{H}\cup \mathcal{F}_4^n )= \mathcal{F}_2^n \cup \mathcal{F}_3^n \cup \mathcal{F}_4^n $. \qed
\vspace{1em}

\noindent{\bf Proof of Theorem \ref{T112}. }For $k = 4$ and $n \geq 2$, by Proposition \ref{nl32} and Corollary \ref{C311}, any $C_4$-saturated quadripartite graph $G$ with minimum number of edges must be isomorphic to some graph in $\mathcal{F}_1^n \cup \mathcal{F}_2^n \cup \mathcal{F}_3^n \cup \mathcal{F}_4^n =\mathcal{F}^n $. Thus $sat(K_4^n,C_4)=5n-1$. \qed

\section*{Acknowledgement}
This paper is supported by the National Natural Science Foundation of China (No.~12401445, 12571372).


\begin{thebibliography}{99}


	\bibitem{BOL} B. Bollob\'as, On a conjecture of Erd\H os, Hajnal and Moon, Amer. Math. Monthly, 74(1967), pp. 178-179.


	\bibitem{Chen1} Y. Chen, Minimum $C_5$-saturated graphs, J. Graph Theory, 61(2009), pp. 111–126.

	\bibitem{Chen2} Y. Chen, All minimum $C_5$-saturated graphs, J. Graph Theory, 67(2011), pp. 9–26.

	\bibitem{survey} B.L. Currie, J.R. Faudree, R.J. Faudree and J.R. Schmitt, A survey of minimum saturated graphs, Electron. J. Combin., 18 (2011), D519.


	\bibitem{EHM} P. Erd\H os, A. Hajnal and J.W. Moon, A problem in graph theory,  Amer. Math. Monthly  71(1964), pp. 1107-1110.

	\bibitem{FJP} M. Ferrara, M. S. Jacobson, F. Pfender and P. S. Wenger, Graph saturation in multipartite graphs, J. Comb., 7(2016), pp. 1-19.



 	\bibitem{Fur} Z. F{\"u}redi, Y. Kim, Cycle-saturated graphs with minimum number of edges, J. Graph Theory, 73(2)(2013), pp. 203-215.


	\bibitem{GIRAO} A. Gir\~ ao, T. Kittipassorn and K. Popielarz, Partite saturation of complete graphs, SIAM J. Discrete Math., 33(4)(2019), pp.2346-2359.




	
	\bibitem{LAN} Y. Lan, Y. Shi, Y. Wang and J. Zhang, The saturation number of $C_6$, Discrete Math. 348(8)(2025), 114504.

	 \bibitem{Moh} A. Mohammadiam, M. Poursoltani and B. Tayfeh-Rezaie, On saturation numbers of complete multipartite graphs and even cycles, Arxiv:2506.09767.

	\bibitem{OLL} L. Ollmann, $K_{2,2}$-saturated graphs with a minimal number of edges, Pro. 3rd Southeastern Conference on Combinatorics, Graph and Computing (1972), pp. 367-392.

		
	\bibitem{ROB} B. Roberts, Partite saturation problems, J. Graph Theory, 85(2017), pp. 429-445.



    \bibitem{TUZ} Z. Tuza, $C_4$-saturated graphs of minimum size, Acta Univ. Carolin. Math. Phys., 30(2)(1989), pp. 161-167.

    \bibitem{WES} W. Wessel, \"Uber eine Klasse paarer Graphen, I: Beweis einer Vermutung von Erd\H os, Hajnal and Moon, Wiss. Z. Hochsch. Ilmenau, 12(1966), pp. 253-256.

    \bibitem{Xu} Y. Xu, Z. He and M. Lu, Partite saturation number of cycles, Discrete Mathematics. 349(3)(2026), 114802.


	

 \end{thebibliography}
\end{document}